\newtheorem{theorem}{Theorem}[section]
\newtheorem{lemma}[theorem]{Lemma}
\newtheorem{corollary}[theorem]{Corollary}
\newtheorem{result}[theorem]{Result}
\begin{document}
\title{On the Mean Connected Induced Subgraph Order of Cographs }
\author{Matthew E. Kroeker\thanks{Supported by an NSERC USRA}, Lucas Mol, and Ortrud R. Oellermann\thanks{Supported by an NSERC Grant CANADA, Grant number RGPIN-2016-05237}\\ University of Winnipeg, MB R3B 2E9\\
\small \url{mattkroeker@shaw.ca, l.mol@uwinnipeg.ca,
 o.oellermann@uwinnipeg.ca}}


\maketitle
\begin{abstract}
In this article the extremal structures for the mean order of connected induced subgraphs of cographs are determined. It is shown that among all connected cographs of order $n \ge 7$, the star $K_{1,n-1}$ has maximum mean connected induced subgraph order, and for $n \ge 3$, the $n$-{\em skillet}, $K_1+(K_1 \cup K_{n-2})$, has minimum mean connected induced subgraph order. It is deduced that the {\em density} for connected cographs (i.e.\ the ratio of the mean to the order of the graph) is asymptotically $1/2$. The  mean order of all connected induced subgraphs containing a given vertex $v$ of a cograph $G$, called the {\em local mean} of $G$ at $v$, is shown to be at least as large as the mean order of all connected induced subgraphs of $G$, called the {\em global mean} of $G$.
\end{abstract}

\section{Introduction}

The study of the mean order of subtrees of a tree was initiated by Jamison in \cite{j1, j2}. Several open problems posed in \cite{j1} were subsequently studied in \cite{mo, vw, ww1,ww}. In this article we extend the concept of the mean subtree order of trees to other graphs. Since the subtrees of a tree are precisely the connected induced subgraphs of a tree it is natural to study the problem of finding the mean order of the connected induced subgraphs of (connected) graphs.

If $G$ is a graph of order $n$ and $a_k(G)$ denotes the number of connected induced subgraphs of order $k$ in $G$ for each $k\in\{1,\dots,n\}$, then the {\em connected induced subgraph polynomial} of $G$ is given by
\[
\Phi_G(x) =\sum_{k=1}^n a_k(G)x^k.
\]
The mean order of the connected induced subgraphs of $G$, called the {\em global mean} of $G$, is given by the logarithmic derivative
\[
M_G =\frac{\Phi_G'(1)}{\Phi_G(1)}.
\]
We note that the polynomial $\Phi_G(x)$ is related to the \emph{residual node connectedness reliability} (or \emph{node reliability}), first studied in \cite{NRComplexity}.  For $p \in (0,1)$ the node reliability of $G$ is given by
\[
R_G(p)=\sum_{k=1}^na_k(G)p^k(1-p)^{n-k}.
\]
It is readily seen that
\[
\Phi_G(x)=(1+x)^nR_G\left(\tfrac{x}{1+x}\right).
\]
Given that each vertex of $G$ operates independently with probability $p$, the node reliability $R_G(p)$ is the probability that the operational vertices induce a connected subgraph.  In particular, it was shown in \cite{NRComplexity} that the problem of counting the total number of connected induced subgraphs of $G$ (that is, finding $\Phi_G(1)$) is \#P-complete, even for split graphs and for the family of graphs that are both planar and bipartite.  It follows that computing either $R_G(p)$ or $\Phi_G(x)$ is NP-hard in general.  However, polynomial algorithms have been found for certain restricted families \cite{NRPolynomial}, including trees, series-parallel graphs, and permutation graphs.  Several authors have considered optimality questions for node reliability \cite{NR1,NR2,NR3}, while others have investigated the (complex) roots of the polynomial \cite{NRRoots} and the shape on the interval $(0,1)$ \cite{NRShape}.

Let $v$ be a vertex of $G$, and for each $k\in\{1,\dots,n\}$ let $a_k(G;v)$ denote the number of connected induced subgraphs of $G$ containing $v$ and having order $k$.  Then the generating polynomial for the connected induced subgraphs of $G$ containing $v$, called the {\em local connected induced subgraph polynomial} of $G$ at $v$, is given by
\[
\Phi_{G,v}(x) =\sum_{k=1}^na_k(G;v)x^k.
\]
The mean order of the connected induced subgraphs containing $v$, called the {\em local mean} of $G$ at $v$, is given by the logarithmic derivative $M_{G,v} = \Phi'_{G}(v;1)/ \Phi_G(v;1)$. The {\em density} of a graph $G$ of order $n$ is defined as $M_G/n$. The density of a graph $G$ is the probability that a randomly chosen vertex belongs to a randomly chosen connected induced subgraph of $G$.

This paper focuses on  the mean order of connected induced subgraphs of `cographs'. A graph is a {\em cograph} if it contains no induced path on $4$ vertices.  Thus a graph is a cograph if and only if its complement is a cograph.  The following characterization of cographs proved in \cite{clb} will be used throughout the paper.  For graphs $G_1$ and $G_2$, the union of $G_1$ and $G_2$ is denoted by $G_1\cup G_2$ while the join is denoted by $G_1+G_2.$  The complement of a graph $G$ is denoted by $\overline{G}.$

\begin{result}\label{cographcharact}
Let $G$ be a graph.  The following are equivalent:
\begin{enumerate}
\item $G$ is a cograph.
\item $G \cong K_1$, or there exist two cographs $G_1$ and $G_2$ such that either $G=G_1 \cup G_2$, or $G=G_1+G_2$.
\item For each $S\subseteq V(G)$ with $|S|\geq 2$, $S$ induces a connected subgraph of $G$ if and only if $S$ induces a disconnected subgraph of $\overline{G}$ (i.e.\ each nontrivial vertex set induces a connected graph in exactly one of $G$ or $\overline{G}$).
\end{enumerate}
\end{result}

This result implies that $G$ is a connected cograph if and only if $G \cong K_1$ or $G$ is the join of two cographs. Moreover, $G$ is a nontrivial connected cograph if and only if its complement is a disconnected cograph.

We also make frequent use of the following straightforward result from \cite{j1} on logarithmic derivatives.  Essentially, if we express $\Phi_G(x)$ as a sum of polynomials with nonnegative coefficients, then $M_G$ is a weighted average of the logarithmic derivatives (means) of the polynomials in the sum.  Here and throughout this article, we use the convention that the logarithmic derivative of the zero polynomial is $0$.

\begin{result}\label{ConvexComboResult}
Let $\Phi_1,\dots,\Phi_k$ be polynomials with nonnegative coefficients.  Then the logarithmic derivative of $\Phi_1+\dots+\Phi_k$ is a convex combination of the logarithmic derivatives of the $\Phi_i$, i.e.\ $$\frac{\Phi_1'(1)+ \ldots + \Phi_k'(1)}{\Phi_1(1)+ \ldots + \Phi_k(1)} = \frac{\Phi_1'(1)}{\Phi_1(1)}c_1 + \ldots + \frac{\Phi_k'(1)}{\Phi_k(1)} c_k$$ where $c_1+ \ldots + c_k =1$, and $c_i\geq 0$ for all $i\in\{1,\dots,k\}$.
\end{result}

In several proofs throughout the article, we make use of a variant of the global mean. Let $G$ be a graph of order $n$, and let
\[
\Phi^*_G(x)=\Phi_G(x)-nx.
\]
The logarithmic derivative of $\Phi^*_G,$ denoted $M^*_G$, is the mean order of the nontrivial (that is, order at least $2$) connected induced subgraphs of $G$.  By our convention that the logarithmic derivative of a zero polynomial is $0$, if $G$ has no edges, then $M^*_G=0.$  The value $M^*_G$ is called the \emph{$M^*$ mean} of $G.$

\begin{figure}[h!]
\begin{center}
    \includegraphics{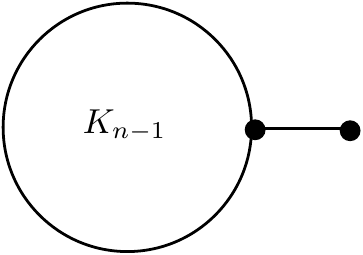}
    \caption{An $n$-skillet}
    \label{skillet}
\end{center}
\end{figure}

The layout of this article is as follows.  In Section \ref{MaxSection}, we show that among all connected cographs of order $n\ge 7$, the star $K_{1,n-1}$ has maximum mean connected induced subgraph order. In Section \ref{LocalSection}, we show that the local mean at any vertex of a connected cograph is at least as large as the global mean.  In Section \ref{MinSection}, we show that among all connected cographs of order $n \ge 3$ the $n$-{\em skillet}  $S_n=K_1+(K_1\cup K_{n-2})$ (see Fig.~\ref{skillet}), has minimum mean connected induced subgraph order. Finally, in Section \ref{DisconnectedSection}, we find the extremal structures for the mean connected induced subgraph order of disconnected cographs of order $n.$ We conclude by observing  that the density of connected cographs is asymptotically $1/2$.

\section{Connected cographs with maximum mean}\label{MaxSection}
We show in this section that among all connected cographs of order $n\geq 7$, the star $K_{1,n-1}$ has maximum global mean.  First note that
\[
\Phi_{K_{1,n-1}}(x)=x(1+x)^{n-1}+(n-1)x,
\]
so it is easy to verify that
\[
M_{K_{1,n-1}}=\frac{(n+1)\cdot 2^{n-2}+n-1}{2^{n-1}+n-1}=\frac{n+1}{2}-\frac{(n-1)^{2}}{2(2^{n-1}+n-1)}.
\]
The fact that this mean is maximum among all connected cographs of order $n\geq 7$ follows from two intermediate results which we prove in this section:
\begin{enumerate}
\item Among all complete bipartite graphs $K_{s, n-s}$, where $1\leq s\leq n-1$, the star $K_{1,n-1}$ has largest mean whenever $n\geq 7$.
\item If $G= G_{1} + G_{2}$, where $|V(G_{1})|=s$ and $|V(G_{2})|=n-s$, then $M_G \leq M_{K_{s,n-s}}$.
\end{enumerate}

We begin with some useful observations.  Let $G= G_{1} + G_{2}$, where $G_{1}$ and $G_{2}$ are graphs of orders $s$ and $n-s$, respectively. Then the connected induced subgraph polynomial of $G$ is given by
\[
\Phi_G (x) = \Phi_{G_{1}} (x) + \Phi_{G_{2}} (x) + \Phi^*_{K_{s,n-s}}(x),
\]
as the connected induced subgraphs of $G$ lie completely in either $G_1$ or $G_2$, or have at least one vertex from each graph (and every vertex of $G_1$ is joined to every vertex of $G_2$).  The polynomial $\Phi^*_{K_{s,n-s}}(x)$ plays an important role in much of our work.  By a basic counting argument, we have
\begin{align*}
\Phi^*_{K_{s,n-s}} (x) &=((1+x)^{s} - 1)((1+x)^{n-s} - 1)\\
&=(1+x)^n-(1+x)^s-(1+x)^{n-s}+1.
\end{align*}
For ease of notation, we let $\psi_{s,n-s}(x)=\Phi^*_{K_{s,n-s}}(x)$, so that
\begin{align}
\psi_{s,n-s}(1)&=2^n-2^{n-s}-2^s+1,\label{psi1}\\
\psi'_{s,n-s}(1)&=n2^{n-1}-s2^{s-1}-(n-s)2^{n-s-1},\label{psi2}
\end{align}
and
\begin{align*}
M^{\ast}_{K_{s,n-s}}=\frac{\psi'_{s,n-s}(1)}{\psi_{s,n-s}(1)}=\frac{n2^{n-1}-s2^{s-1}-(n-s)2^{n-s-1}}{2^n-2^{n-s}-2^s+1}.
\end{align*}

In order to show that $K_{1,n-1}$ has largest global mean among all complete bipartite graphs of order $n\geq 7$, we first show that $K_{1,n-1}$ has largest $M^*$ mean among all complete bipartite graphs of order $n\geq 1$.  The result is trivial for $n\leq 3$, and for $n\geq 4$ we prove the stronger result that $M^*_{K_{s,n-s}}$ is decreasing in $s$ for $s\in\left\{1,\dots,\lfloor\tfrac{n}{2}\rfloor\right\}.$  This stronger result is used to prove several of our later results, so we include it in the Lemma statement below.

\begin{lemma}\label{DecreaseLemma}
For $n\geq 4$ and $s\in\left\{1,\dots,\left\lfloor\tfrac{n}{2}\right\rfloor-1\right\},$
\[
M^*_{K_{s,n-s}}>M^*_{K_{s+1,n-s-1}}.
\]
In particular, $K_{1,n-1}$ has maximum $M^*$ mean among all complete bipartite graphs of order $n$ for all $n\geq 1$.
\end{lemma}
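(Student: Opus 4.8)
The plan is to exploit the product structure of $\psi_{s,n-s}$ to rewrite $M^*_{K_{s,n-s}}$ as a sum of two single-variable terms, and then reduce the desired monotonicity to the convexity of one simple sequence. Since $\psi_{s,n-s}(x)=\big((1+x)^s-1\big)\big((1+x)^{n-s}-1\big)$ is a product, and the logarithmic derivative of a product is the sum of the logarithmic derivatives of its factors, I would first note that
\[
M^*_{K_{s,n-s}}=\frac{\psi'_{s,n-s}(1)}{\psi_{s,n-s}(1)}=g(s)+g(n-s),\qquad g(k)=\frac{k\,2^{k-1}}{2^k-1},
\]
where $g(k)$ is the value at $x=1$ of the logarithmic derivative of the factor $(1+x)^k-1$. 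A short manipulation gives $g(k)=\tfrac{k}{2}+\tfrac{k}{2(2^k-1)}$, so that, setting $\rho(k)=\tfrac{k}{2^k-1}$,
\[
M^*_{K_{s,n-s}}=\frac{n}{2}+\frac12\big(\rho(s)+\rho(n-s)\big).
\]

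Because $n/2$ is a constant, it then suffices to show that $\phi(s):=\rho(s)+\rho(n-s)$ is strictly decreasing in $s$ over the stated range. I would split the difference by writing $\delta(k)=\rho(k)-\rho(k+1)$; a direct rearrangement gives
\[
\phi(s)-\phi(s+1)=\delta(s)-\delta(n-s-1).
\]
On the range $s\in\{1,\dots,\lfloor n/2\rfloor-1\}$ one checks that $s<n-s-1$, so it is enough to prove that $\delta$ is strictly decreasing, i.e.\ that $\rho$ is a strictly convex sequence.

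Finally I would establish the convexity of $\rho$ by a direct computation. The forward difference works out to
\[
\rho(k+1)-\rho(k)=-\frac{(k-1)2^k+1}{(2^{k+1}-1)(2^k-1)},
\]
and forming the second difference $\delta(k)-\delta(k+1)=\rho(k)-2\rho(k+1)+\rho(k+2)$ over the common factor $(2^{k+1}-1)$ and clearing the remaining denominators reduces the positivity claim to the elementary inequality
\[
2^{k+1}(k-2)+(k+4)>0,
\]
which holds for all $k\ge1$: it is evident for $k\ge2$, and a direct check settles $k=1$. This yields $\delta(s)>\delta(n-s-1)$ and hence $M^*_{K_{s,n-s}}>M^*_{K_{s+1,n-s-1}}$. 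The ``in particular'' statement then follows for $n\ge4$ from the strict decrease together with the symmetry $K_{s,n-s}\cong K_{n-s,s}$ (forcing the maximum over $s\in\{1,\dots,n-1\}$ to occur at $s=1$), while the cases $n\le3$ are immediate.

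The main obstacle I anticipate is purely the bookkeeping in the last step: the second-difference computation involves three distinct powers of two over a common denominator, and collecting terms correctly so as to arrive at the clean inequality $2^{k+1}(k-2)+(k+4)>0$ requires care. Everything preceding it is structural, the key conceptual move being the reduction of a two-parameter monotonicity to the convexity of the single sequence $\rho(k)=k/(2^k-1)$.
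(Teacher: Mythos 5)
Your proof is correct, but it takes a genuinely different and slicker route than the paper's. The paper proves the inequality by cross-multiplying the two $M^*$ means, expanding the numerator $\psi'_{s,n-s}(1)\psi_{s+1,n-s-1}(1)-\psi'_{s+1,n-s-1}(1)\psi_{s,n-s}(1)$ into a six-term expression in powers of $2$, and verifying positivity through a case split ($s=1$ versus $2\le s\le\lfloor n/2\rfloor-1$) together with several separate inductions. You instead exploit the factorization $\psi_{s,n-s}(x)=\bigl((1+x)^s-1\bigr)\bigl((1+x)^{n-s}-1\bigr)$: since the logarithmic derivative of a product is the sum of the logarithmic derivatives of its factors, you get the exact closed form
\[
M^*_{K_{s,n-s}}=\frac{n}{2}+\frac{1}{2}\bigl(\rho(s)+\rho(n-s)\bigr),\qquad \rho(k)=\frac{k}{2^k-1},
\]
which reduces the two-parameter monotonicity to the strict convexity of the single sequence $\rho$, settled by one clean second-difference computation (I have checked your difference formula, the reduction to $2^{k+1}(k-2)+(k+4)>0$, and the endpoint case $k=1$; all are correct, and the symmetry argument for the ``in particular'' clause is fine). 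What your approach buys: it is shorter, avoids all case analysis and induction, and as a free by-product the identity immediately yields $M^*_{K_{s,n-s}}>\tfrac n2$ (the paper's Lemma~\ref{ShortLemma}) since $\rho>0$. What the paper's approach buys: its intermediate inequalities (\ref{SecondGroup}) and (\ref{ThirdGroup}), established inside the proof, are explicitly reused in the proof of Theorem~\ref{FirstMainLemma}, where the extra additive terms $nx$ in $\Phi_{K_{s,n-s}}$ destroy the product structure and your logarithmic-derivative trick no longer applies directly; so if your proof replaced the paper's, those two inequalities would still have to be proved separately for the next result.
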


\begin{proof} First note that $s\leq\left\lfloor\tfrac{n}{2}\right\rfloor-1$ is equivalent to $n\geq 2s+2$.  We use this inequality throughout the proof. We have
\begin{align}
M^*_{K_{s,n-s}}-M^*_{K_{s+1,n-s-1}}&=\frac{\psi'_{s,n-s}(1)}{\psi_{s,n-s}(1)}-\frac{\psi'_{s+1,n-s-1}(1)}{\psi_{s+1,n-s-1}(1)}\nonumber\\
&=\frac{\psi'_{s,n-s}(1)\psi_{s+1,n-s-1}(1) - \psi'_{s+1,n-s-1}(1)\psi_{s,n-s}(1)}{\psi_{s,n-s}(1)\psi_{s+1,n-s-1}(1)}.\label{MeanKs}
\end{align}

It suffices to show that the numerator of the expression in (\ref{MeanKs}) is positive for $1\leq s\leq \left\lfloor\tfrac{n}{2}\right\rfloor-1.$
Substituting the appropriate expressions from (\ref{psi1}) and (\ref{psi2}) into the numerator of (\ref{MeanKs}), and then simplifying yields
\begin{align}\label{maineqn}
\begin{split}
&2^{2n-2s-2}-(n-s+1)\cdot 2^{n-s-2}+ (s-1)\cdot 2^{2n-s-2}-(n-s-2)\cdot 2^{n+s-1}\\
& \ \ \ \ +3(n-2s-1)\cdot 2^{n-2} - 2^{2s} + (s+2)\cdot 2^{s-1}.
\end{split}
\end{align}
The proof that (\ref{maineqn}) is strictly positive requires two cases:
\begin{flushleft}
{\bf Case 1: $s=1$.} Evaluating (\ref{maineqn}) at $s=1$ and simplifying gives
\end{flushleft}
\[
2^{2n-4}-(3n-6)2^{n-3} - 1,
\]
which is easily verified to be positive for $n\geq 4$ by induction.

\noindent{\bf Case 2: $2 \leq s \leq \left\lfloor \frac{n}{2} \right\rfloor - 1$.}
Since $n\geq 2s+2,$ we may assume that $n\geq 6.$  Grouping the terms of (\ref{maineqn}), we claim that
\begin{align}
2^{2n-2s-2} - (n-s+1)\cdot 2^{n-s-2} &> 0,\label{FirstGroup}\\
(s-1)\cdot 2^{2n-s-2}-(n-s-2)\cdot 2^{n+s-1}&\geq 0, \mbox{ and} \label{SecondGroup} \\
3(n-2s-1)\cdot 2^{n-2} - 2^{2s}&\geq 2^{n-1}, \label{ThirdGroup}
\end{align}
which together imply that (\ref{maineqn}) is strictly positive.  While we only need to show that $3(n-2s-1)\cdot 2^{n-2}-2^{2s}$ is nonnegative here, the stronger inequality (\ref{ThirdGroup}) is used in the proof of our next result.

For (\ref{FirstGroup}), we factor the left-hand side to obtain
\[
2^{n-s-2}\left[2^{n-s}-(n-s+1)\right],
\]
so it suffices to show that $2^{n-s}-(n-s+1)>0.$  Let $k=n-s$, and note that $k\geq s+2\geq 4.$  The inequality $2^k-(k+1)>0$ is easily proven by induction, completing the proof of (\ref{FirstGroup}).

For (\ref{SecondGroup}), factoring the left-hand side gives
\[
2^{n+s-1}\left[(s-1)\cdot 2^{n-2s-1}-(n-s-2)\right],
\]
so it suffices to show that $(s-1)\cdot 2^{n-2s-1}-(n-s-2)\geq 0.$  For each fixed value of $s\geq 2$, we prove this inequality by induction on $n.$  For the base case, we substitute $n=2s+2,$ and we find
\[
(s-1)\cdot 2-s=s-2\geq 0.
\]
Now suppose that the inequality holds for some natural number $n\geq 2s+2$.  Then
\begin{align*}
(s-1)\cdot 2^{n-2s}-(n-s-1)&=(s-1)\cdot 2^{n-2s-1}-(n-s-2)+(s-1)\cdot 2^{n-2s-1}-1\\
&\geq (s-1)\cdot 2^{n-2s-1}-1\\
&>0,
\end{align*}
completing the proof of (\ref{SecondGroup}).

Finally, for (\ref{ThirdGroup}), we use the inequality $n\geq 2s+2$ twice:
\[
3(n-2s-1)\cdot 2^{n-2} - 2^{2s}\geq 3\cdot 2^{n-2}-2^{2s}\geq 3\cdot 2^{n-2}-2^{n-2}=2^{n-1}.\qedhere
\]
\end{proof}

We now prove the first of the two statements from which our main result will follow.

\begin{theorem}\label{FirstMainLemma}
For $n\geq 7$,
\[
M_{K_{1,n-1}}>M_{K_{2,n-2}},
\]
and for $n\geq 6$ and $s\in\left\{2,\dots,\left\lfloor\tfrac{n}{2}\right\rfloor-1\right\},$
\[
M_{K_{s,n-s}}>M_{K_{s+1,n-s-1}}.
\]
In particular, among all complete bipartite graphs of order $n\geq 7,$ the star $K_{1,n-1}$ has maximum global mean.
\end{theorem}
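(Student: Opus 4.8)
The plan is to mirror the strategy of Lemma~\ref{DecreaseLemma}: reduce the desired monotonicity of $M_{K_{s,n-s}}$ to the positivity of a single numerator, and then carefully control the extra contribution of the $n$ trivial subgraphs. Since $K_{s,n-s}=\overline{K_s}+\overline{K_{n-s}}$ and an empty graph contributes only its isolated vertices, the identity $\Phi_G=\Phi_{G_1}+\Phi_{G_2}+\psi_{s,n-s}$ gives $\Phi_{K_{s,n-s}}(x)=nx+\psi_{s,n-s}(x)$, so
\[
M_{K_{s,n-s}}=\frac{n+\psi'_{s,n-s}(1)}{n+\psi_{s,n-s}(1)}.
\]
Writing $M_{K_{s,n-s}}-M_{K_{s+1,n-s-1}}$ over its positive common denominator, it suffices to show the numerator $N$ is positive. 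Expanding, $N$ separates into the numerator of~(\ref{MeanKs}) --- that is, the expression~(\ref{maineqn}) already proved positive in Lemma~\ref{DecreaseLemma} --- plus a correction $n\bigl[(\psi_{s+1,n-s-1}(1)-\psi_{s,n-s}(1))+(\psi'_{s,n-s}(1)-\psi'_{s+1,n-s-1}(1))\bigr]$ contributed by the $nx$ term. Substituting~(\ref{psi1}) and~(\ref{psi2}), this correction collapses to $n\bigl[s\,2^{s-1}-(n-s-1)\,2^{n-s-2}\bigr]$.

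This correction is the heart of the matter, and it is the main obstacle. It is negative whenever $s$ is small relative to $n$, so the monotonicity of $M^*$ alone (Lemma~\ref{DecreaseLemma}) does not settle the sign of $N$: increasing $s$ lowers $M^*_{K_{s,n-s}}$, but it simultaneously raises the weight $\psi_{s,n-s}(1)/(n+\psi_{s,n-s}(1))$ that Result~\ref{ConvexComboResult} assigns to $M^*$ against the logarithmic derivative $1$ of $nx$, and these two effects push $M$ in opposite directions. Thus the argument must quantitatively show that the $2^{2n}$-order positive mass in~(\ref{maineqn}) outweighs the order-$n\,2^{n}$ negative correction, and this is precisely where the strengthened inequality~(\ref{ThirdGroup}), carrying a surplus of $2^{n-1}$ rather than being merely nonnegative, is needed.

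For $s\ge 2$ I would discard the nonnegative quantities and retain only what is necessary. Keeping $2^{2n-2s-2}-(n-s+1)2^{n-s-2}$ from~(\ref{FirstGroup}) and the surplus $2^{n-1}$ from~(\ref{ThirdGroup}), and bounding the positive part $ns\,2^{s-1}$ of the correction below by $0$, yields
\[
N\ \ge\ 2^{n-s-2}\bigl[\,2^{n-s}+2^{s+1}-(n-s+1)-n(n-s-1)\,\bigr].
\]
Since $s\le\lfloor n/2\rfloor-1$ forces $n\le 2(n-s-1)$, we have $n(n-s-1)\le 2(n-s-1)^2$, and writing $k=n-s\ge 4$ reduces the bracket to the single-variable inequality $2^{k}>2(k-1)^2+k-7$, which a short induction establishes for all $k\ge 4$. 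Hence $N>0$ for every $s\in\{2,\dots,\lfloor n/2\rfloor-1\}$ with $n\ge 6$.

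The case $s=1$ is handled by direct substitution, and it is here that the threshold sharpens to $n\ge 7$. Evaluating~(\ref{maineqn}) at $s=1$ and adding the correction yields
\[
N=2^{2n-4}-(n+3)(n-2)\,2^{n-3}+(n-1)=2^{n-3}\bigl[2^{n-1}-(n+3)(n-2)\bigr]+(n-1).
\]
A routine induction gives $2^{n-1}>(n+3)(n-2)$ for $n\ge 7$, whence $N>0$; at $n=6$ the bracket is negative and in fact $N<0$, so $M_{K_{1,5}}<M_{K_{2,4}}$ and the hypothesis $n\ge 7$ cannot be relaxed. Combining the two cases shows $M_{K_{s,n-s}}$ strictly decreases in $s$ across $\{1,\dots,\lfloor n/2\rfloor-1\}$ for $n\ge 7$; since $M_{K_{s,n-s}}=M_{K_{n-s,s}}$, the maximum over all complete bipartite graphs of order $n\ge 7$ is attained at $s=1$, namely by $K_{1,n-1}$. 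As anticipated, the difficulty lies entirely in the sign-indefinite correction term, whose tightest instances occur in the nearly balanced regime (small $n-s$), exactly where the doubly-exponential lead is slimmest.
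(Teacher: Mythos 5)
Your proposal is correct and follows essentially the same route as the paper's proof: the same expression $M_{K_{s,n-s}}=\bigl(n+\psi'_{s,n-s}(1)\bigr)/\bigl(n+\psi_{s,n-s}(1)\bigr)$, the same split of the numerator into (\ref{maineqn}) plus the correction $n\bigl[s\,2^{s-1}-(n-s-1)2^{n-s-2}\bigr]$, the same use of (\ref{FirstGroup})--(\ref{ThirdGroup}) with the $2^{n-1}$ surplus, and the same $s=1$ reduction to $2^{2n-4}-(n^2+n-6)2^{n-3}+n-1$ (your $(n+3)(n-2)$ equals the paper's $n^2+n-6$). The only deviation is cosmetic: where the paper proves the final bracket inequality $2^{n-s}+2^{s+1}-(n^2-ns-s+1)>0$ by induction on $n$ for each fixed $s\geq 2$, you use $n\leq 2(n-s-1)$ and $2^{s+1}\geq 8$ to collapse it to a single-variable induction in $k=n-s$.
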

\begin{proof} For any $n$ and any $s\in\{1,\dots, n-1\},$ we have
\[
\Phi_{K_{s,n-s}}=nx+\psi_{s,n-s}(x),
\]
so that
\begin{align*}
     M_{K_{s,n-s}}= \frac{n+\psi_{s,n-s}^{'}(1)}{n+\psi_{s,n-s}(1)}
\end{align*}
Hence, for any $s\in\left\{1,\dots,\left\lfloor\tfrac{n}{2}\right\rfloor-1\right\}$,
\begin{align*}
     &M_{K_{s,n-s}}-M_{K_{s+1,n-s-1}}\\
     & \ \ \ \ =\frac{[n+\psi_{s,n-s}^{'}(1)][n+\psi_{s+1,n-s-1}(1)] - [n+\psi_{s,n-s}(1)][n+\psi_{s+1,n-s-1}^{'}(1)]}{[n+\psi_{s,n-s}(1)]
     [n+\psi_{s+1,n-s-1}(1)]}.
\end{align*}
Expanding the numerator gives
\begin{align}\label{equation}
\begin{split}
&\psi_{s,n-s}^{'}(1)\psi_{s+1,n-s-1}(1) - \psi_{s,n-s}(1)\psi_{s+1,n-s-1}^{'}(1)\\
& \ \ \ \ +n\left[\psi_{s,n-s}^{'}(1) + \psi_{s+1,n-s-1}(1) - \psi_{s,n-s}(1) - \psi_{s+1,n-s-1}^{'}(1)\right],
\end{split}
\end{align}
and it suffices to show that this quantity is positive for the given values of $n$ and $s.$

We first demonstrate that (\ref{equation}) is positive when $s=1$ and $n\geq 7$, proving the first part of the Theorem.  When $s=1$, using (\ref{psi1}) and (\ref{psi2}), we find that (\ref{equation}) simplifies to
\[
2^{2n-4}-(n^2+n-6)\cdot 2^{n-3}+n-1,
\]
which is easily seen to be positive for $n\geq 7$ by induction.

Now let $n\geq 6$ and $s\in\left\{2,\dots,\left\lfloor\tfrac{n}{2}\right\rfloor-1\right\}$.  An expression for the first line of (\ref{equation}) is given by (\ref{maineqn}).   Applying inequalities (\ref{SecondGroup}) and (\ref{ThirdGroup}), proven in Lemma \ref{DecreaseLemma}, we obtain 
\begin{align*}
&\psi_{s,n-s}^{'}(1)\psi_{s+1,n-s-1}(1) - \psi_{s,n-s}(1)\psi_{s+1,n-s-1}^{'}(1)\\
&\ \ \ \ \geq 2^{2n-2s-2}-(n-s+1)\cdot 2^{n-s-2}+2^{n-1}.
\end{align*}
For the expression on the second line of (\ref{equation}), employing (\ref{psi1}) and (\ref{psi2}) and then simplifying, we find
\begin{align*}
     \psi_{s,n-s}^{'}(1) + \psi_{s+1, n-s-1}(1) - \psi_{s,n-s}(1) - \psi_{s+1, n-s-1}^{'}(1)&=s\cdot 2^{s-1}-(n-s-1)\cdot 2^{n-s-2}\\
     &>-(n-s-1) \cdot 2^{n-s-2}
\end{align*}
It follows that (\ref{equation}) is greater than
\begin{align*}
&2^{2n-2s-2}-(n-s+1)\cdot 2^{n-s-2}+2^{n-1}-n(n-s-1)\cdot 2^{n-s-2}\\
&\ \ \ \ =2^{n-s-2}\left[2^{n-s}-(n-s+1)+2^{s+1}-n(n-s-1)\right]\\
& \ \ \ \ =2^{n-s-2}\left[2^{n-s}+2^{s+1}-(n^2-ns-s+1)\right],
\end{align*}
so it suffices to show that
\begin{align}
2^{n-s}+2^{s+1}-(n^2-ns-s+1)>0. \label{simpeqn}
\end{align}
For each fixed value of $s\geq 2$, we proceed by induction on $n$.  Since $n\geq 2s+2$, we begin with the base case $n=2s+2.$  In this case, the left-hand side of (\ref{simpeqn}) evaluates to
\[
2^{s+2}+2^{s+1}-(2s^2+5s+5)=3\cdot 2^{s+1}-(2s^2+5s+5),
\]
which is positive for $s\geq 2$ by induction.  Now suppose that (\ref{simpeqn}) holds for some $n\geq 2s+2$.  Then
\begin{align*}
&2^{n+1-s}+2^{s+1}-\left[(n+1)^2-(n+1)s-s+1\right]\\
&=2^{n-s}+[2^{n-s}+2^{s+1}-(n^2-ns-s+1)]-(2n+1)+s\\
&> 2^{n-s}-(2n+1)+s,
\end{align*}
so it suffices to show that $2^{n-s}-(2n+1)+s\geq 0.$  We omit the details, but for each fixed $s\geq 2$, the inequality $2^{n-s}-(2n+1)>0$ can be shown to hold for $n\geq 2s+2$ by induction.  This completes the proof.
\end{proof}

To prove the second of our main two statements the following three lemmas on $M^{\ast}$ means will be useful.

\begin{lemma}\label{IncreaseLemma}
$M^{\ast}_{K_{1,n-1}}$ is strictly increasing for $n \geq 1$. Moreover, $\tfrac{n+1}{2}<M^{\ast}_{K_{1,n-1}}\leq \tfrac{n+2}{2}$ for all $n\geq 2.$
\end{lemma}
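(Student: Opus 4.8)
The plan is to first obtain a clean closed form for $M^{\ast}_{K_{1,n-1}}$ and then read off all three claims (monotonicity, lower bound, upper bound) from it. Setting $s=1$ in the general formula for $M^{\ast}_{K_{s,n-s}}$ and using $2^n-2^{n-1}-2+1=2^{n-1}-1$ together with $n2^{n-1}-(n-1)2^{n-2}=(n+1)2^{n-2}$, I would record that for $n\geq 2$,
\[
M^{\ast}_{K_{1,n-1}}=\frac{(n+1)2^{n-2}-1}{2^{n-1}-1}.
\]
For $n=1$ the graph is $K_1$, which has no nontrivial connected induced subgraph, so $M^{\ast}_{K_1}=0$ by the stated convention; this boundary value must be treated separately since the displayed formula degenerates to $0/0$ at $n=1$.

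The key manipulation is to split off the leading term by writing $(n+1)2^{n-2}=\tfrac{n+1}{2}(2^{n-1}-1)+\tfrac{n+1}{2}$, which yields the transparent identity
\[
M^{\ast}_{K_{1,n-1}}=\frac{n+1}{2}+\frac{n-1}{2\,(2^{n-1}-1)}\qquad(n\geq 2).
\]
From this, both bounds are immediate. Since the correction term $\tfrac{n-1}{2(2^{n-1}-1)}$ is strictly positive for $n\geq 2$, the lower bound $M^{\ast}_{K_{1,n-1}}>\tfrac{n+1}{2}$ follows at once. For the upper bound I would show that this correction term is at most $\tfrac12$, which reduces to the inequality $n-1\leq 2^{n-1}-1$, i.e.\ $n\leq 2^{n-1}$; this holds for all $n\geq 2$ (with equality exactly at $n=2$) by a routine induction, giving $M^{\ast}_{K_{1,n-1}}\leq\tfrac{n+2}{2}$.

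For the monotonicity I would compare consecutive terms directly. Writing $f(n)=M^{\ast}_{K_{1,n-1}}=p(n)/q(n)$ for $n\geq 2$, with $q(n)=2^{n-1}-1>0$, we have
\[
f(n+1)-f(n)=\frac{p(n+1)q(n)-p(n)q(n+1)}{q(n)\,q(n+1)},
\]
and the numerator, after expansion and factoring, simplifies to $2^{n-2}\bigl(2^n-n-1\bigr)$. This is strictly positive for $n\geq 2$, so $f(n+1)>f(n)$ for all $n\geq 2$; the remaining step $f(1)=0<2=f(2)$ is checked directly, establishing strict monotonicity for all $n\geq 1$. The only real friction is this last bookkeeping at the boundary $n=1$, where the closed form is not defined and one must fall back on the convention for the empty graph; the interior computations are all elementary once the closed form is in hand.
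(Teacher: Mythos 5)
Your proof is correct and takes essentially the same route as the paper: both derive the closed form $M^{\ast}_{K_{1,n-1}}=\frac{n+1}{2}+\frac{n-1}{2(2^{n-1}-1)}$ for $n\geq 2$ and obtain both bounds from $0<\frac{n-1}{2(2^{n-1}-1)}\leq\frac{1}{2}$, the upper bound reducing to $n\leq 2^{n-1}$. The only difference is the monotonicity step: the paper gets it for free from the bounds themselves (the value at $n$ is at most $\tfrac{n+2}{2}$ while the value at $n+1$ strictly exceeds $\tfrac{n+2}{2}$), together with the boundary check $M^{\ast}_{K_{1,1}}=2>0=M^{\ast}_{K_{1,0}}$, whereas you carry out an explicit difference computation --- which is also correct (the numerator does factor as $2^{n-2}(2^{n}-n-1)>0$ for $n\geq 2$), just slightly more work than necessary.
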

\begin{proof}
First of all, note that $M^*_{K_{1,1}}=2>0=M^*_{K_{1,0}}$.  Now for $n\geq 2$, we have
\[
M^{\ast}_{K_{1,n-1}}=\frac{2^{n-2}(n+1)-1}{2^{n-1}-1}=\frac{n+1}{2}+\frac{n-1}{2(2^{n-1}-1)},
\]
so it suffices to show that $0< \tfrac{n-1}{2(2^{n-1}-1)}\leq \tfrac{1}{2}$ for $n\geq 2.$  The first inequality is obvious, while the second holds by a straightforward induction.
\end{proof}

\begin{lemma}\label{LogDeriv}
Let G be a cograph (not necessarily connected) of order $n \geq 1$. Then $M^{\ast}_{G} \le M^{\ast}_{K_{1,n-1}}$, with equality if and only if $G\cong K_{1,n-1}.$
\end{lemma}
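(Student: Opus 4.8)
The plan is to induct on the order $n$, using the recursive structure of cographs from Result \ref{cographcharact}. The base case $n=1$ is immediate: the only cograph is $K_1=K_{1,0}$, which has no nontrivial connected induced subgraph, so $M^*_{K_1}=0=M^*_{K_{1,0}}$ and equality holds. For $n\geq 2$, Result \ref{cographcharact} lets me write $G=G_1\cup G_2$ or $G=G_1+G_2$ for cographs $G_1,G_2$ of orders $a$ and $b=n-a$ with $1\leq a,b\leq n-1$, so the inductive hypothesis applies to both $G_i$. In either case I will express $\Phi^*_G$ as a sum of polynomials with nonnegative coefficients and invoke Result \ref{ConvexComboResult} to realize $M^*_G$ as a convex combination of $M^*$ means of strictly smaller cographs, together with (in the join case) an $M^*$ mean of a complete bipartite graph. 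The two monotonicity facts I will lean on are: $M^*_{K_{1,m-1}}$ is strictly increasing in $m$ (Lemma \ref{IncreaseLemma}), so $M^*_{K_{1,a-1}}<M^*_{K_{1,n-1}}$ whenever $a<n$; and $M^*_{K_{s,n-s}}\leq M^*_{K_{1,n-1}}$ with equality only for $s\in\{1,n-1\}$ (Lemma \ref{DecreaseLemma}).

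In the disjoint-union case a connected induced subgraph lies entirely in $G_1$ or entirely in $G_2$, so $\Phi^*_G=\Phi^*_{G_1}+\Phi^*_{G_2}$ and $M^*_G$ is a convex combination of $M^*_{G_1}$ and $M^*_{G_2}$. By the inductive hypothesis $M^*_{G_i}\leq M^*_{K_{1,a_i-1}}$, and since each $a_i\leq n-1$, strict monotonicity gives $M^*_{G_i}<M^*_{K_{1,n-1}}$ on every summand of positive weight (edgeless summands contribute weight $0$ by our convention). Hence $M^*_G<M^*_{K_{1,n-1}}$ strictly, so no disconnected cograph attains equality, which is consistent with the star being connected.

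The join case is where the real work lies. Writing $s=a$, the connected induced subgraphs of $G=G_1+G_2$ split into those internal to $G_1$, those internal to $G_2$, and those meeting both parts, which gives $\Phi^*_G=\Phi^*_{G_1}+\Phi^*_{G_2}+\psi_{s,n-s}$. By Result \ref{ConvexComboResult}, $M^*_G$ is a convex combination of $M^*_{G_1}$, $M^*_{G_2}$, and $M^*_{K_{s,n-s}}$. The inductive hypothesis bounds the first two by $M^*_{K_{1,s-1}}$ and $M^*_{K_{1,n-s-1}}$, both strictly below $M^*_{K_{1,n-1}}$ by monotonicity, while Lemma \ref{DecreaseLemma} bounds the third by $M^*_{K_{1,n-1}}$; this already yields $M^*_G\leq M^*_{K_{1,n-1}}$. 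The main obstacle is the equality characterization, which I would handle as follows. Since $\psi_{s,n-s}(1)=2^n-2^s-2^{n-s}+1>0$ for all $1\leq s\leq n-1$, the weight on the $M^*_{K_{s,n-s}}$ term is strictly positive; equality in the convex combination therefore forces $M^*_{K_{s,n-s}}=M^*_{K_{1,n-1}}$, so $s\in\{1,n-1\}$ by Lemma \ref{DecreaseLemma}. By the symmetry of the join I may take $s=1$, so that $G_1=K_1$ contributes the zero polynomial $\Phi^*_{K_1}$ (weight $0$) and $K_{s,n-s}=K_{1,n-1}$. If $G_2$ had an edge, then $\Phi^*_{G_2}$ would carry positive weight while $M^*_{G_2}\leq M^*_{K_{1,n-2}}<M^*_{K_{1,n-1}}$, making the combination strict, a contradiction. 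Hence $G_2$ is edgeless, i.e.\ $G_2\cong(n-1)K_1$, and $G=K_1+(n-1)K_1\cong K_{1,n-1}$, completing the equality case.
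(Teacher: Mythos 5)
Your proof is correct and follows essentially the same route as the paper: induction on $n$ via the union/join decomposition of Result \ref{cographcharact}, expressing $M^*_G$ as a convex combination through Result \ref{ConvexComboResult}, and closing with Lemmas \ref{IncreaseLemma} and \ref{DecreaseLemma}. Your treatment of the equality case (using $\psi_{s,n-s}(1)>0$ to force $s\in\{1,n-1\}$ and then forcing the other side to be edgeless) is in fact slightly more explicit than the paper's, which compresses this into the observation that $c_1=c_2=0$.
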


\begin{proof}
We proceed by induction on $n$. The statement is trivial for $n=1$. Suppose now that $n\geq 2$ and that the statement holds for every integer $k$, $1 \le k  < n$. Since $G$ is a cograph, it is either the join of two cographs or the union of two cographs.

\noindent
{\bf Case 1:} $G=G_{1}+G_{2}$, where $G_{1}$ and $G_{2}$ are cographs of orders $s$ and $n-s$ respectively.

In this case,
\begin{align*}
     \Phi_{G}(x) = \left[\Phi_{G_{1}}(x) - sx\right] + \left[\Phi_{G_{2}}(x) - (n-s)x\right]
     + \Phi_{K_{s,n-s}}(x),
\end{align*}
and hence
\begin{align*}
     \Phi_{G}(x) -nx = \left[\Phi_{G_{1}}(x) - sx\right] + \left[\Phi_{G_{2}}(x) - (n-s)x\right]
     + \left[\Phi_{K_{s,n-s}}(x) - nx\right].
\end{align*}
Thus, by taking the logarithmic derivative of both sides, we obtain the following expression:
\begin{equation}\label{convexcom}
     M^{\ast}_{G} = c_{1}M^{\ast}_{G_{1}} + c_{2}M^{\ast}_{G_{2}} + c_{3}M^{\ast}_{K_{s,n-s}},
\end{equation}
for real numbers $c_1,c_2,c_3 \ge 0$ such that $c_{1}+c_{2}+c_{3}=1$. By the induction hypothesis and then Lemma \ref{IncreaseLemma},
\begin{align*}
     M^{\ast}_{G_{1}} \leq M^{\ast}_{K_{1,s-1}}< M^{\ast}_{K_{1,n-1}} \ \ \ \mbox{and} \ \ \
     M^{\ast}_{G_{2}} \leq M^{\ast}_{K_{1,n-s-1}}< M^{\ast}_{K_{1,n-1}},
\end{align*}
and by Lemma \ref{DecreaseLemma},
\begin{align*}
     M^{\ast}_{K_{s,n-s}} \le M^{\ast}_{K_{1,n-1}}.
\end{align*}
Therefore, since each of the means on the right-hand side of (\ref{convexcom}) is at most $M^{\ast}_{K_{1,n-1}},$ we must have
\begin{align*}
M^{\ast}_{G} \leq M^{\ast}_{K_{1,n-1}}.
\end{align*}
Since $M^*_{G_1}<M^*_{K_{1,n-1}}$ and $M^*_{G_2}<M^*_{K_{1,n-1}}$, we have equality if and only if $c_1=c_2=0$, or equivalently, $G\cong K_{1,n-1}.$

\noindent
{\bf Case 2:} $G=G_{1} \cup G_{2}$, where $G_{1}$ and $G_{2}$ are cographs of orders $s$ and $n-s$ respectively.

In this case,
\begin{align*}
     \Phi_{G}(x) -nx = \left[\Phi_{G_{1}}(x) - sx\right] + \left[\Phi_{G_{2}}(x) - (n-s)x\right].
\end{align*}
Hence
\begin{align*}
     M^{\ast}_{G} = c_{1}M^{\ast}_{G_{1}} + c_{2}M^{\ast}_{G_{2}},
\end{align*}
for real numbers $c_1, c_2 \ge 0$ such that $c_{1}+c_{2}=1$. By the induction hypothesis and Lemma \ref{IncreaseLemma},
\begin{align*}
     M^{\ast}_{G} &\leq c_{1}M^{\ast}_{K_{1,s-1}} + c_{2}M^{\ast}_{K_{1,n-s-1}}< c_{1}M^{\ast}_{K_{1,n-1}} + c_{2}M^{\ast}_{K_{1,n-1}}=M^*_{K_{1,n-1}}.\qedhere
\end{align*}
\end{proof}

\begin{lemma}\label{LogDerivandMean}
Let $n\geq 6$ and $s\in\{1,\dots,n-1\}.$
\begin{enumerate}
\item If $G$ is a cograph on $k\leq n-2$ vertices, then $M^*_{G}<M_{K_{s,n-s}}.$
\item If $G$ is a cograph on $n-1$ vertices, then $M^*_{G}<M_{K_{1,n-1}}.$
\end{enumerate}

\end{lemma}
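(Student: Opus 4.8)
The plan is to handle both parts by first invoking Lemma~\ref{LogDeriv} to replace the arbitrary cograph $G$ by a star of the same order, and then comparing the resulting $M^*$ mean of that star against the relevant global mean $M_{K_{s,n-s}}$. Throughout I will use the identity $M_{K_{s,n-s}}=\frac{n+\psi'_{s,n-s}(1)}{n+\psi_{s,n-s}(1)}$ from the proof of Theorem~\ref{FirstMainLemma}, together with (\ref{psi1}) and (\ref{psi2}).

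For part (a), since $G$ has order $k\le n-2$, Lemma~\ref{LogDeriv} gives $M^*_G\le M^*_{K_{1,k-1}}$, and because $M^*_{K_{1,m-1}}$ is increasing in $m$ and bounded above by $\tfrac{m+2}{2}$ (Lemma~\ref{IncreaseLemma}), I obtain $M^*_G\le M^*_{K_{1,n-3}}\le\tfrac{n}{2}$. It therefore suffices to prove the clean lower bound $M_{K_{s,n-s}}>\tfrac{n}{2}$ for every $s\in\{1,\dots,n-1\}$ and every $n\ge 6$. A short computation shows this is equivalent to $2\psi'_{s,n-s}(1)-n\psi_{s,n-s}(1)>n(n-2)$, which, after substituting (\ref{psi1}) and (\ref{psi2}), simplifies to
\[
(n-s)2^{s}+s\,2^{n-s}>n(n-1).
\]

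The main obstacle is that the left-hand side above is not monotonic in $s$: its minimum sits at an endpoint for some values of $n$ and near the centre for others, so a direct induction on $s$ is awkward. I would instead obtain a bound uniform in $s$ via the AM--GM inequality,
\[
(n-s)2^{s}+s\,2^{n-s}\ge 2\sqrt{s(n-s)}\,2^{n/2}\ge 2\sqrt{n-1}\,2^{n/2},
\]
where the last step uses $s(n-s)\ge n-1$ for $1\le s\le n-1$. Squaring, the desired inequality reduces to $2^{n+2}>n^{2}(n-1)$, an exponential-versus-polynomial bound readily established for $n\ge 6$ by induction. Combining with the first paragraph yields $M^*_G\le\tfrac{n}{2}<M_{K_{s,n-s}}$.

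For part (b), Lemma~\ref{LogDeriv} gives $M^*_G\le M^*_{K_{1,n-2}}$, with equality only when $G\cong K_{1,n-2}$, so it suffices to prove the strict inequality $M^*_{K_{1,n-2}}<M_{K_{1,n-1}}$. Substituting the closed forms $M^*_{K_{1,n-2}}=\tfrac{n}{2}+\tfrac{n-2}{2(2^{n-2}-1)}$ (from Lemma~\ref{IncreaseLemma}) and $M_{K_{1,n-1}}=\tfrac{n+1}{2}-\tfrac{(n-1)^{2}}{2(2^{n-1}+n-1)}$, this is equivalent to
\[
\frac{n-2}{2^{n-2}-1}+\frac{(n-1)^{2}}{2^{n-1}+n-1}<1.
\]
This last inequality is the delicate point, since at $n=6$ its left-hand side is already about $0.94$. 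I would therefore verify $n=6$ by direct computation and handle $n\ge 7$ using the crude bounds $\frac{n-2}{2^{n-2}-1}\le\frac{8(n-2)}{2^{n}}$ and $\frac{(n-1)^{2}}{2^{n-1}+n-1}<\frac{2(n-1)^{2}}{2^{n}}$, which reduce the claim to $2n^{2}+4n-14<2^{n}$, again a routine induction for $n\ge 7$.
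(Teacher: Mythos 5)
Your proposal is correct, and its skeleton matches the paper's: both parts reduce via Lemma~\ref{LogDeriv} to a comparison of star means, part (a) further reduces via Lemma~\ref{IncreaseLemma} to showing $M_{K_{s,n-s}}>\tfrac{n}{2}$, and part (b) to showing $M^*_{K_{1,n-2}}<M_{K_{1,n-1}}$. Where you diverge is in how these closing inequalities are verified, and the divergence is substantive in part (a): the paper first assumes $s\le\lfloor n/2\rfloor$ without loss of generality and then only sketches the key step, asserting that the positivity of $(n-s)2^{s}+s2^{n-s}+n-n^{2}$ follows ``by an argument similar to'' the induction used for (\ref{simpeqn}), i.e.\ a fixed-$s$ induction on $n\ge 2s$. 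Your AM--GM bound $(n-s)2^{s}+s2^{n-s}\ge 2\sqrt{s(n-s)}\,2^{n/2}\ge 2\sqrt{n-1}\,2^{n/2}$, reducing everything to the single inequality $2^{n+2}>n^{2}(n-1)$, is uniform in $s$ (using $s(n-s)-(n-1)=(s-1)(n-1-s)\ge 0$), dispenses with the WLOG step, and actually supplies the details the paper omits --- arguably a cleaner finish, and correctly sidestepping the non-monotonicity in $s$ that you identify. In part (b) the trade goes the other way: the paper puts $M_{K_{1,n-1}}-M^*_{K_{1,n-2}}$ over a common denominator and the numerator factors exactly as $2^{n-3}\left[2^{n-1}-n(n-1)\right]$, so a single induction for $2^{n-1}>n(n-1)$ handles all $n\ge 6$ with no separate base-case fuss, whereas your crude bounds $\tfrac{n-2}{2^{n-2}-1}\le\tfrac{8(n-2)}{2^n}$ and $\tfrac{(n-1)^2}{2^{n-1}+n-1}<\tfrac{2(n-1)^2}{2^n}$ lose just enough that $n=6$ must be checked by hand before the induction $2n^{2}+4n-14<2^{n}$ takes over for $n\ge 7$; both routes are valid, the paper's being slightly tighter and yours more mechanical.
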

\begin{proof} Part (a): Without loss of generality we may assume that $s\leq \left\lfloor \tfrac{n}{2}\right\rfloor$, or equivalently $n\geq 2s$.  By Lemma \ref{LogDeriv},
$M^*_G\leq M^*_{K_{1,k-1}},$
so it suffices to show that $M^*_{K_{1,k-1}}<M_{K_{s,n-s}}.$  From  Lemma \ref{IncreaseLemma}, we have
\[
M^*_{K_{1,k-1}}\leq \tfrac{k+2}{2}\leq \tfrac{n}{2},
\]
so it suffices to show that $M_{K_{s,n-s}}>\tfrac{n}{2}.$  Indeed, we have
\begin{align}
     M_{K_{s,n-s}} - \frac{n}{2}&= \frac{n2^{n-1}-s2^{s-1}-(n-s)2^{n-s-1}+n}{2^{n}-2^{s}-2^{n-s}+1+n}
     - \frac{n}{2} \nonumber\\
     &= \frac{(n-s)2^{s}+s2^{n-s}+n-n^2}{2\left[2^{n}-2^{s}-2^{n-s}+1+n \right]} \label{MeanKs_n-s}
\end{align}
Using an argument similar to the one used to show that the expression in (\ref{simpeqn}) is positive, it can be argued for fixed $s \ge 1$ and by induction on $n \ge 2s$ that the numerator in (\ref{MeanKs_n-s}) is positive.

\noindent
Part (b): By Lemma \ref{LogDeriv},
$M^*_G\leq M^*_{K_{1,n-2}},$
so it suffices to show that $M^*_{K_{1,n-2}}<M_{K_{1,n-1}}.$  We have
\begin{align*}
     M_{K_{1,n-1}} - M^{\ast}_{K_{1,n-2}}= \frac{(n+1)2^{n-2}+(n-1)}{2^{n-1}+(n-1)} - \frac{n2^{n-3}-1}{2^{n-2}-1}.
\end{align*}
The numerator of this expression over common denominator $\left[2^{n-1}+(n-1)\right]\left[2^{n-2}-1\right]$ is given by
\begin{align*}
     &\ \ \ (n+1)2^{2n-4}+(n-1)2^{n-2}-(n+1)2^{n-2}-(n-1)\\
     & \ \ \ \ \ \ \ - \left[n2^{2n-4}+n(n-1)2^{n-3}-2^{n-1}-(n-1)\right]
     \\
     &= 2^{2n-4} + (n-1)2^{n-2} + 2^{n-1} - (n+1)2^{n-2} - n(n-1)2^{n-3} \\
     &= 2^{n-3}\left[2^{n-1}-n(n-1)\right],
\end{align*}
and $2^{n-1} > n(n-1)$ holds for $n \ge 6$ by induction.
\end{proof}

We now prove the second of the two main intermediate results of this section.

\begin{theorem}\label{SecondMainLemma}
Let $G$ be a connected cograph of order $n\geq 6$, obtained from the join of two cographs $G_{1}$ and $G_{2}$ of orders $s$ and $n-s$, respectively. Then $M_{G} \leq M_{K_{s,n-s}}$, with equality if and only if $G\cong K_{s,n-s}$.
\end{theorem}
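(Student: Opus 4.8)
The plan is to combine the join decomposition of $\Phi_G$ with the convex-combination principle of Result~\ref{ConvexComboResult}. Splitting off the single-vertex subgraphs via $\Phi_{G_i}(x) = \Phi^*_{G_i}(x) + |V(G_i)|\,x$, the identity $\Phi_G(x) = \Phi_{G_1}(x) + \Phi_{G_2}(x) + \psi_{s,n-s}(x)$ recorded before Lemma~\ref{DecreaseLemma} rearranges, using $nx + \psi_{s,n-s}(x) = \Phi_{K_{s,n-s}}(x)$, into the clean form
\[
\Phi_G(x) = \Phi^*_{G_1}(x) + \Phi^*_{G_2}(x) + \Phi_{K_{s,n-s}}(x).
\]
Taking logarithmic derivatives and applying Result~\ref{ConvexComboResult} then writes $M_G$ as a convex combination
\[
M_G = c_1 M^*_{G_1} + c_2 M^*_{G_2} + c_3 M_{K_{s,n-s}},
\]
where $c_1, c_2, c_3 \ge 0$, $c_1 + c_2 + c_3 = 1$, and each $c_i$ is the value at $1$ of the corresponding summand divided by $\Phi_G(1)$.

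Since the statement is symmetric under $G_1 \leftrightarrow G_2$ (and $K_{s,n-s} = K_{n-s,s}$), I would assume $s \le \lfloor n/2\rfloor$. The task reduces to bounding $M^*_{G_1}$ and $M^*_{G_2}$ against $M_{K_{s,n-s}}$: if both are strictly smaller, then convexity forces $M_G \le M_{K_{s,n-s}}$. I would split into cases on $s$. When $2 \le s \le \lfloor n/2\rfloor$, both $G_1$ (order $s \le n-2$) and $G_2$ (order $n-s \le n-2$) are cographs on at most $n-2$ vertices, so Lemma~\ref{LogDerivandMean}(a) gives $M^*_{G_1} < M_{K_{s,n-s}}$ and $M^*_{G_2} < M_{K_{s,n-s}}$ at once. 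When $s = 1$, we have $G_1 \cong K_1$, so $\Phi^*_{G_1}$ is the zero polynomial and $c_1 = 0$, while $K_{s,n-s} = K_{1,n-1}$ and $G_2$ has order $n-1$; here Lemma~\ref{LogDerivandMean}(b) supplies the needed $M^*_{G_2} < M_{K_{1,n-1}}$. In both cases every term of the convex combination besides $M_{K_{s,n-s}}$ is strictly below it, so $M_G \le M_{K_{s,n-s}}$.

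For the equality characterization, I would read it off from the convex combination: since $M^*_{G_1}, M^*_{G_2} < M_{K_{s,n-s}}$ strictly, we can have $M_G = M_{K_{s,n-s}}$ only if $c_1 = c_2 = 0$, i.e.\ $\Phi^*_{G_1}(1) = \Phi^*_{G_2}(1) = 0$. As $\Phi^*_{G_i}(1)$ counts the nontrivial connected induced subgraphs of $G_i$, this happens exactly when each $G_i$ is edgeless, so $G_1 \cong \overline{K_s}$, $G_2 \cong \overline{K_{n-s}}$, and $G = \overline{K_s} + \overline{K_{n-s}} \cong K_{s,n-s}$; the converse is trivial. I expect no serious obstacle, as the three preceding lemmas on $M^*$ means carry the analytic load and the argument is mostly bookkeeping around the convex-combination identity. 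The only point needing care is the boundary case $s = 1$, where one summand degenerates to the zero polynomial and one must invoke part (b) rather than part (a) of Lemma~\ref{LogDerivandMean}.
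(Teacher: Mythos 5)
Your proposal is correct and follows essentially the same route as the paper: the same decomposition $\Phi_G = \Phi^*_{G_1} + \Phi^*_{G_2} + \Phi_{K_{s,n-s}}$, the same convex-combination step via Result~\ref{ConvexComboResult}, the same appeal to Lemma~\ref{LogDerivandMean}, and the same equality analysis via $c_1 = c_2 = 0$. Your explicit split into the cases $2 \le s \le \lfloor n/2 \rfloor$ and $s = 1$ merely spells out which part of Lemma~\ref{LogDerivandMean} is being invoked, a detail the paper's proof leaves implicit.
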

\begin{proof}
Since $G=G_{1}+G_{2}$, the generating polynomial for the connected induced subgraphs of $G$ is given by
\begin{align*}
     \Phi_{G}(x) = \left[\Phi_{G_{1}}(x) -sx\right] + \left[\Phi_{G_{2}}(x) - (n-s)x \right] + \Phi_{K_{s,n-s}}(x).
\end{align*}
Thus,
\begin{align*}
     M_{G} = c_{1}M^{\ast}_{G_{1}} + c_{2}M^{\ast}_{G_{2}} + c_{3}M_{K_{s,n-s}}
\end{align*}
for some real numbers $c_1,c_2,c_3 \ge 0$ such that $c_1+c_2+c_3 =1$. By Lemma \ref{LogDerivandMean}, $M^{\ast}_{G_{1}}< M_{K_{s,n-s}}$ and $M^{\ast}_{G_{2}} < M_{K_{s,n-s}}$.  Therefore, $M_G$ is a weighted average of means of which $M_{K_{s,n-s}}$ is the largest, from which we conclude that $M_G\leq M_{K_{s,n-s}}.$  The inequality is strict unless $c_1=c_2=0$, i.e.\ $G\cong K_{s,n-s}$.
\end{proof}

Our main result now follows easily.

\begin{corollary}\label{FirstMainTheorem}
If $G$ is a connected cograph of order $n\geq 7$, then
\begin{align*}
M_{G}\leq \frac{n+1}{2}-\frac{(n-1)^{2}}{2(2^{n-1}+n-1)},
\end{align*}
with equality if and only if $G\cong K_{1,n-1}.$ Generally, if $G$ is a connected cograph of order $n\geq 1$, then $M_G\leq \tfrac{n+1}{2},$ with equality if and only if $n=1$.
\end{corollary}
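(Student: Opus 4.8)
The plan is to chain the two intermediate results already established in this section. Fix a connected cograph $G$ of order $n \geq 7$. Since $n \geq 2$, Result \ref{cographcharact} (together with the remark following it) guarantees that $G$ is a join $G_1 + G_2$ of two cographs, of orders $s$ and $n-s$ with $1 \leq s \leq n-1$; by relabelling we may assume $s \leq \left\lfloor \tfrac{n}{2} \right\rfloor$. Theorem \ref{SecondMainLemma} then gives $M_G \leq M_{K_{s,n-s}}$, and Theorem \ref{FirstMainLemma} gives $M_{K_{s,n-s}} \leq M_{K_{1,n-1}}$, since $K_{1,n-1}$ has maximum global mean among all complete bipartite graphs of order $n \geq 7$. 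Combining these yields $M_G \leq M_{K_{1,n-1}}$, and substituting the closed form for $M_{K_{1,n-1}}$ computed at the start of the section produces exactly the stated bound.

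For the equality characterization, I would trace equality back through both steps. If $M_G = M_{K_{1,n-1}}$, then both inequalities above must be equalities. The strict decreasing chain of Theorem \ref{FirstMainLemma} forces $s = 1$ in the range $s \leq \left\lfloor \tfrac{n}{2} \right\rfloor$ (any $2 \leq s \leq \left\lfloor \tfrac{n}{2} \right\rfloor$ would give $M_{K_{s,n-s}} < M_{K_{1,n-1}}$), so that $K_{s,n-s} = K_{1,n-1}$; the equality clause of Theorem \ref{SecondMainLemma} then forces $G \cong K_{s,n-s} = K_{1,n-1}$. The reverse implication is immediate, as $K_{1,n-1}$ is itself a complete bipartite graph.

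For the general bound $M_G \leq \tfrac{n+1}{2}$ with $n \geq 1$, the case $n = 1$ is direct: $\Phi_{K_1}(x) = x$, so $M_{K_1} = 1 = \tfrac{n+1}{2}$, giving equality. For $n \geq 7$, strictness follows from the first part, since the subtracted term $\tfrac{(n-1)^2}{2(2^{n-1}+n-1)}$ is positive, whence $M_G \leq M_{K_{1,n-1}} < \tfrac{n+1}{2}$. The remaining orders $2 \leq n \leq 6$, where Theorems \ref{FirstMainLemma} and \ref{SecondMainLemma} are not both available with the conclusion that $K_{1,n-1}$ dominates, comprise only finitely many connected cographs, and I would dispatch these by direct computation of $M_G$, confirming $M_G < \tfrac{n+1}{2}$ in each case.

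The inequality itself is an easy consequence of the two theorems, so the only real care needed lies in the bookkeeping. The main subtlety is the equality analysis: one must invoke the \emph{strictness} of the complete-bipartite comparison in Theorem \ref{FirstMainLemma} to pin down $s=1$ before applying the equality clause of Theorem \ref{SecondMainLemma}. A secondary, purely computational obstacle is the verification of the small orders $2 \leq n \leq 6$ for the general bound, since the two theorems are stated only for $n$ large enough that $K_{1,n-1}$ is already known to be the complete-bipartite maximizer.
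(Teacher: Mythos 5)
Your proposal is correct and follows essentially the same route as the paper: the first statement is obtained by chaining Theorem \ref{SecondMainLemma} ($M_G \leq M_{K_{s,n-s}}$) with Theorem \ref{FirstMainLemma} ($K_{1,n-1}$ maximizes over complete bipartite graphs for $n\geq 7$), and the general bound is handled by strictness for $n\geq 7$ plus exhaustive verification of the finitely many connected cographs of order $n\leq 6$, exactly as in the paper's proof. Your equality analysis (forcing $s=1$ via the strict chain, then invoking the equality clause of Theorem \ref{SecondMainLemma}) just makes explicit what the paper leaves implicit in its one-line argument.
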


\begin{proof}
The first statement follows immediately from Theorems \ref{FirstMainLemma} and \ref{SecondMainLemma}.  The second statement follows easily from the first in the case that $n\geq 7$ and is easily checked exhaustively for $n\leq 6$ using a computer algebra system.  The cographs of maximum global mean for $n\leq 6$ and their means are given in Table \ref{MaxTable}.  Decimal expansions are provided for ease of verifying this statement.
\end{proof}

\begin{table}[h]
\begin{center}
\begin{tabular}{c c c c}
Order & Cograph & Global Mean & Decimal Expansion\\
$1$ & $K_1$ & $1$ & $1$\\
$2$ & $K_2$ & $4/3$ & $1.33\dots$\\
$3$ & $K_3$ & $12/7$ & $1.71\dots$\\
$4$ & $K_{2,2}$ & $28/13$ & $2.15\dots$\\
$5$ & $K_{2,3}$ & $69/26$ & $2.65\dots$\\
$6$ & $K_{2,4}$ & $54/17$ & $3.18\dots$
\end{tabular}
\caption{The cographs of maximum global mean among all connected cographs of order $n\leq 6$.\label{MaxTable}}
\end{center}
\end{table}

Finally, we observe that our upper bound on the mean connected induced subgraph order of connected cographs generalizes nicely to all cographs, as described below.

\begin{corollary}\label{DisconnectedUpperBound}
Let $G$ be a cograph in which every component has order at most $s$.  Then $M_G\leq \tfrac{s+1}{2}$ with equality if and only if $s=1.$
\end{corollary}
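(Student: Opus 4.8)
The plan is to reduce the statement to the general upper bound for \emph{connected} cographs already established in Corollary \ref{FirstMainTheorem}, via the decomposition of $G$ into its connected components. Write $G_1,\dots,G_m$ for the components of $G$, with orders $n_1,\dots,n_m$, so that $n_i\le s$ for each $i$. Each $G_i$ is an induced subgraph of the cograph $G$, hence is itself a cograph, and it is connected by definition of a component; thus every $G_i$ is a connected cograph. Since every connected induced subgraph of $G$ lies entirely within a single component, the connected induced subgraph polynomial splits as
\[
\Phi_G(x)=\sum_{i=1}^m \Phi_{G_i}(x).
\]

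First I would apply Result \ref{ConvexComboResult} to this sum to express the global mean as a convex combination $M_G=\sum_{i=1}^m c_i M_{G_i}$, where $c_i=\Phi_{G_i}(1)/\Phi_G(1)>0$ and $\sum_{i=1}^m c_i=1$ (each $c_i$ is strictly positive because $\Phi_{G_i}(1)\ge 1$). Then, invoking the general part of Corollary \ref{FirstMainTheorem} on each connected cograph $G_i$, I obtain $M_{G_i}\le \tfrac{n_i+1}{2}\le \tfrac{s+1}{2}$. Since $M_G$ is a convex combination of values each bounded above by $\tfrac{s+1}{2}$, the bound $M_G\le \tfrac{s+1}{2}$ follows at once.

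For the equality characterization, I would note that a convex combination with all positive weights attains its upper bound $\tfrac{s+1}{2}$ exactly when every $M_{G_i}$ equals $\tfrac{s+1}{2}$. Tracing the two inequalities $M_{G_i}\le\tfrac{n_i+1}{2}\le\tfrac{s+1}{2}$, equality in the first forces $n_i=1$ (by the equality clause of Corollary \ref{FirstMainTheorem}), while equality in the second forces $n_i=s$; these are compatible only when $s=1$. Conversely, if $s=1$ then every component is $K_1$, so $M_{G_i}=1=\tfrac{s+1}{2}$ and hence $M_G=1$. Therefore equality holds if and only if $s=1$.

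There is no serious analytic obstacle here: the result is essentially a bookkeeping assembly of Result \ref{ConvexComboResult} and Corollary \ref{FirstMainTheorem}, with no new estimates to prove. The only point demanding genuine care is the equality analysis, specifically checking that the two distinct equality conditions inherited from Corollary \ref{FirstMainTheorem} (order one for the mean bound, order $s$ for the comparison $\tfrac{n_i+1}{2}=\tfrac{s+1}{2}$) can hold simultaneously only in the degenerate case $s=1$.
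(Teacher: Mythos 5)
Your proof is correct and follows essentially the same route as the paper: decompose $\Phi_G$ over the components, apply Result \ref{ConvexComboResult} to write $M_G$ as a convex combination of the $M_{G_i}$, and bound each $M_{G_i}$ via Corollary \ref{FirstMainTheorem}. Your explicit tracing of the equality case (order $1$ versus order $s$, compatible only when $s=1$) is a careful spelling-out of what the paper leaves implicit in its one-line proof.
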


\begin{proof}
Suppose $G$ has components $G_1,\dots, G_k$.  Since $\Phi_G(x)=\sum_{i=1}^k\Phi_{G_i}(x)$, the mean $M_G$ is a convex combination of the $M_{G_i}$, so the result follows immediately from Corollary \ref{FirstMainTheorem}.
\end{proof}

\section{Comparing local and global means}\label{LocalSection}

It was shown in \cite{j1} that
\[
M_{T,v}\geq M_T
\]
for any tree $T$ and any vertex $v$ of $T$, with equality if and only if $T\cong K_1.$  It is natural to ask whether this property extends to the mean connected induced subgraph order of graphs in general.  We provide an infinite family of graphs with a vertex for which this inequality does not hold.

For each natural number $n$, let $\mathcal{H}_n$ be the family of graphs of order $n$ with mean greater than $\tfrac{n+2}{2}.$  From \cite{mo}, there is a tree (in fact, a caterpillar) $T_n$ of order $n$ such that $M_{T_n}\geq n-\lceil 2\log_2 n\rceil -1$ for all $n$.  It follows that $\mathcal{H}_n$ is nonempty for all $n\geq 25,$ and further, there is a tree in $\mathcal{H}_n$ for all $13\leq n\leq 24$ by computational work.  Define
\[
\mathcal{G}_n=\{H+K_1\colon \ H\in \mathcal{H}_{n-1}\},
\]
and note that $\mathcal{G}_n$ is nonempty for all $n\geq 14.$

Let $G\in \mathcal{G}_n$ for some $n\geq 14$.  In particular, let $G=H+v$, where $H\in \mathcal{H}_{n-1}.$  Then
\begin{align}
\Phi_{G}(x)=\Phi_{H}(x)+\Phi_{G}(v;x).\label{LocalGlobal}
\end{align}
Since $v$ is universal in $G$, we see that $\Phi_G(x)=x(1+x)^{n-1}$, and thus $M_{G,v}=\tfrac{n+1}{2}.$  Since $H\in\mathcal{H}_{n-1},$ we have $M_H>\tfrac{(n-1)+2}{2}=\tfrac{n+1}{2}=M_{G,v}.$  We conclude that $M_H>M_G>M_{G,v}$, since $M_G$ is a convex combination of $M_H$ and $M_{G,v}$ by (\ref{LocalGlobal}).  Therefore, for each $n\geq 14$, $\mathcal{G}_n$ is nonempty and every member of $\mathcal{G}_n$ has a vertex at which the local mean is less than the global mean.

Using results from the previous section, we now demonstrate that $M_{G,v}\geq M_G$ for any connected \textit{cograph} G, and any vertex $v\in V(G).$  Aside from being interesting in its own right, this result will be used in the next section when we determine the structure of connected cographs with minimum global mean.

\begin{theorem}\label{LocalMean}
Let $G$ be a connected cograph of order $n \ge 1$ and  $v$ any vertex of $G$. Then $$M_{G,v} \geq \tfrac{n+1}{2}.$$
\end{theorem}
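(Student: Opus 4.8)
The plan is to prove the equivalent inequality $2\Phi'_{G,v}(1) - (n+1)\Phi_{G,v}(1) \ge 0$ (equivalent because $\Phi_{G,v}(1) > 0$) by strong induction on $n$. The base case $n=1$ is immediate: $G \cong K_1$ gives $\Phi_{G,v}(x)=x$ and $M_{G,v}=1=\tfrac{n+1}{2}$. For the inductive step, I would use Result~\ref{cographcharact} to write the nontrivial connected cograph as a join $G=G_1+G_2$ with $v\in V(G_1)$, and set $s=|V(G_1)|$, so that $1\le s\le n-1$. A key structural point is that $\Phi_{G_1,v}$ depends only on the component $C$ of $v$ in $G_1$; writing $c=|V(C)|$, the first step is to derive the decomposition $\Phi_{G,v}(x)=\Phi_{C,v}(x)+x(1+x)^{s-1}\!\left[(1+x)^{n-s}-1\right]$. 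This follows from a case split on whether a connected induced subgraph through $v$ meets $G_2$: if it avoids $G_2$ it must lie inside $C$ (contributing $\Phi_{C,v}$), and if it meets $G_2$ the join renders it connected for \emph{any} subset $A\ni v$ of $V(G_1)$ together with any nonempty $B\subseteq V(G_2)$ (contributing the product term).

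Writing $Q(x)=\Phi_{C,v}(x)$ and $P(x)=x(1+x)^{s-1}\!\left[(1+x)^{n-s}-1\right]=x(1+x)^{n-1}-x(1+x)^{s-1}$, I would next evaluate the join term directly. A short computation of $P(1)$ and $P'(1)$ gives $2P'(1)-(n+1)P(1)=(n-s)2^{s-1}\ge 0$. This is the ``gain'' supplied by the join.

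The contribution of $Q$ is controlled by the induction hypothesis applied to $C$, which is a connected cograph of order $c\le s\le n-1<n$: this yields $2Q'(1)\ge (c+1)Q(1)$, hence $2Q'(1)-(n+1)Q(1)\ge -(n-c)Q(1)$. Combining this with the trivial bound $Q(1)=\Phi_{C,v}(1)\le 2^{c-1}$ (there are only $2^{c-1}$ subsets of $V(C)$ containing $v$) gives $2\Phi'_{G,v}(1)-(n+1)\Phi_{G,v}(1)\ge (n-s)2^{s-1}-(n-c)2^{c-1}$. The proof then reduces to the elementary inequality $(n-s)2^{s-1}\ge (n-c)2^{c-1}$ for $1\le c\le s\le n-1$, which I would establish via $\tfrac{n-c}{n-s}=1+\tfrac{s-c}{n-s}\le 1+(s-c)\le 2^{\,s-c}$, using $n-s\ge 1$ and $1+d\le 2^{d}$.

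The main obstacle I anticipate is precisely that the cograph join decomposition $G=G_1+G_2$ need not produce a connected $G_1$, so the induction hypothesis cannot be invoked for $G_1$ as stated. The resolution is the observation that $\Phi_{G_1,v}$ sees only the component $C$ of $v$, combined with the quantitative inequality above: it exactly balances the weakening factor $(n-c)$ arising from $v$'s (possibly small) component against the gain $(n-s)2^{s-1}$ from the join, with the bound $Q(1)\le 2^{c-1}$ bridging the two. Everything else is routine polynomial evaluation, so I expect the only real work to lie in verifying the decomposition of $\Phi_{G,v}$ and in the final arithmetic inequality.
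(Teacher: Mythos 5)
Your proposal is correct and follows essentially the same route as the paper's proof: the same decomposition $\Phi_{G,v}(x)=\Phi_{G_1,v}(x)+x\left[(1+x)^{n-1}-(1+x)^{s-1}\right]$ (the paper writes $\Phi_{G_1,v}$, which equals your $\Phi_{C,v}$ since connected subgraphs through $v$ stay in its component), the same inductive appeal to the component of $v$, the same bound $\Phi_{C,v}(1)\le 2^{c-1}$, and the same final elementary inequality $(n-s)2^{s-c}\ge n-c$ proved via $n-s\ge 1$ and $2^d\ge d+1$. Your reformulation as $2\Phi'_{G,v}(1)-(n+1)\Phi_{G,v}(1)\ge 0$ is just the paper's computation of $M_{G,v}-\tfrac{n+1}{2}$ with the (positive) denominator cleared, so the two arguments are identical in substance.
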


\begin{proof}
We proceed by induction on $n$. The result holds for $n=1$. Suppose $n \geq 2$ and that the result holds for every connected cograph of order less than $n$, and let $G$ be a connected cograph of order $n$.  Since $G$ is a connected cograph, $G=G_{1}+G_{2}$, where $G_{1}$ is a cograph of order $s$ (say), and $G_{2}$ is a cograph of order $n-s$. Without loss of generality, suppose that $v \in
V(G_{1})$. More specifically, since $G_{1}$ is not necessarily connected, $v$ belongs to a component $H$ of $G_{1}$.  Let $k$ be the order of $H$ (note that $1\leq k\leq s$).

Now the connected induced subgraphs of $G$ containing $v$ can be partitioned into those containing no vertex of $G_2$, and those containing at least one vertex of $G_{2}$.  Hence
\begin{align*}
     \Phi_{G,v}(x) = \Phi_{G_{1},v}(x) + x\left[\sum_{i=1}^{n-s} \tbinom{n-s}{i}x^{i}\right]
     \left[\sum_{j=0}^{s-1}\tbinom{s-1}{j}x^{j}\right].
\end{align*}
Thus,
\begin{align*}
     \Phi_{G,v}(x)&= \Phi_{G_{1},v}(x) + x\left[(1+x)^{n-s}-1\right](1+x)^{s-1} \\
     &= \Phi_{G_{1},v}(x) + x\left[(1+x)^{n-1}-(1+x)^{s-1}\right],
\end{align*}
and
\begin{align*}
     \Phi_{G,v}^{'}(x)&= \Phi_{G_{1},v}^{'}(x) + \left[(1+x)^{n-1}-(1+x)^{s-1}\right]\\
     & \ \ \ \ + x\left[(n-1)(1+x)^{n-2} - (s-1)(1+x)^{s-2}\right].
\end{align*}
Therefore,
\begin{align*}
     M_{G,v} = \frac{\Phi_{G,v}^{'}(1)}{\Phi_{G,v}(1)} =
     \frac{\Phi_{G_1,v}^{'}(1) + 2^{n-2}(n+1) - 2^{s-2}(s+1)}{\Phi_{G_{1},v}(1) + 2^{n-1} -
     2^{s-1}}.
\end{align*}
Straightforward algebra gives
\begin{align*}
     M_{G,v}-\tfrac{n+1}{2}=\frac{2^{s-2}(n-s) + \Phi_{G_{1},v}(1)\left[M_{G_{1},v} -
     \left(\frac{n+1}{2}\right)\right]}{\Phi_{G_{1},v}(1) + 2^{n-1} - 2^{s-1}},
\end{align*}
so it suffices to show that
\begin{align*}
    2^{s-2}(n-s) + \Phi_{G_{1},v}(1)\left[M_{G_{1},v} - \tfrac{n+1}{2}\right] \geq 0.
\end{align*}
By the induction hypothesis,
\begin{align*}
2^{s-2}(n-s) + \Phi_{G_{1},v}(1) \left[ M_{G_{1},v} - \tfrac{n+1}{2} \right]
     &\geq 2^{s-2}(n-s) + \Phi_{G_{1},v}(1) \left[\tfrac{k+1}{2} - \tfrac{n+1}{2} \right] \\
     &= 2^{s-2}(n-s) - \Phi_{G_{1},v}(1) \left( \tfrac{n-k}{2} \right).
\end{align*}
Moreover, $\Phi_{G_{1},v}(1)\leq 2^{k-1}$ with equality if and only if $v$ is a universal vertex of $H$, so we have
\begin{align*}
     2^{s-2}(n-s) - \Phi_{G_{1},v}(1)\left(\frac{n-k}{2}\right)&\geq 2^{s-2}(n-s) -
     2^{k-1}\left(\frac{n-k}{2}\right) \\
     &= 2^{s-2} (n-s)- 2^{k-2}(n-k) \\
     &= 2^{k-2}\left[(n-s)2^{s-k} - n + k\right].
\end{align*}

Now it suffices to show that $(n-s)2^{s-k}-n+k\geq 0.$  Regrouping and applying the inequality $n-s\geq 1$, we obtain
\[
(n-s) 2^{s-k}-n+k=(n-s)\left(2^{s-k}-1\right)+n-s-n+k\geq 2^{s-k}-1-s+k=2^{s-k}-(s-k)-1.
\]
The result follows by letting $m=s-k$, and observing that, by induction on $m \ge 0$, we have $2^m-m-1\geq 0$.
\end{proof}

The fact that the local mean at any vertex of a connected cograph is greater than the global mean now follows directly from Corollary \ref{FirstMainTheorem} and Theorem \ref{LocalMean}.

\begin{corollary}
If $G$ is a connected cograph with vertex $v$, then $M_{G,v}\geq M_G$, with equality if and only if $G\cong K_1.$
\end{corollary}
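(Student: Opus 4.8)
The plan is to obtain the inequality $M_{G,v} \ge M_G$ by sandwiching both means against the common threshold $\tfrac{n+1}{2}$, where $n$ is the order of $G$. Theorem \ref{LocalMean} supplies the lower bound $M_{G,v} \ge \tfrac{n+1}{2}$, and Corollary \ref{FirstMainTheorem} supplies the matching upper bound $M_G \le \tfrac{n+1}{2}$. Chaining these yields
\[
M_{G,v} \ge \tfrac{n+1}{2} \ge M_G,
\]
which is precisely the desired inequality, with no additional computation needed.

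For the equality characterization I would argue in both directions. If $G \cong K_1$, then the only connected induced subgraph is $G$ itself, so $\Phi_G(x) = \Phi_{G,v}(x) = x$ and $M_G = M_{G,v} = 1$, giving equality. Conversely, suppose $M_{G,v} = M_G$. Then the two displayed inequalities must collapse, forcing $M_G = \tfrac{n+1}{2}$. By the equality clause of Corollary \ref{FirstMainTheorem}, the bound $M_G = \tfrac{n+1}{2}$ is attained only when $n=1$, that is, $G \cong K_1$. This completes the characterization.

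There is no genuine obstacle remaining at this point: all the difficulty has already been absorbed into Theorem \ref{LocalMean} and Corollary \ref{FirstMainTheorem}, whose proofs pin the two means to the same value $\tfrac{n+1}{2}$ from opposite sides. The one point deserving care is the equality direction, where one must invoke the sharp equality condition of Corollary \ref{FirstMainTheorem} rather than that of Theorem \ref{LocalMean}. Indeed, the local bound is tight at every universal vertex (if $v$ is universal then $\Phi_{G,v}(x) = x(1+x)^{n-1}$, whence $M_{G,v} = \tfrac{n+1}{2}$), so local equality alone does not force $G \cong K_1$; it is the global upper bound that is tight exactly at $K_1$, and this is what pins down the equality case.
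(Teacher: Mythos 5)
Your proof is correct and is essentially identical to the paper's own argument: both chain Corollary \ref{FirstMainTheorem} and Theorem \ref{LocalMean} through the common threshold $\tfrac{n+1}{2}$ and settle the equality case via the equality clause of Corollary \ref{FirstMainTheorem}. Your closing remark that the local bound alone cannot pin down the equality case (being tight at any universal vertex) is a correct and worthwhile clarification, but the substance matches the paper.
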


\begin{proof}
Let $G$ be a connected cograph of order $n$, and $v\in V(G)$.  By Corollary \ref{FirstMainTheorem} and then Theorem \ref{LocalMean}, we have
\[
M_{G}\leq \tfrac{n+1}{2}\leq M_{G,v},
\]
with equality at the first inequality if and only if $n=1$.
\end{proof}

\section{Connected cographs with minimum mean}\label{MinSection}

In this section we determine the structure of those connected cographs for which the global mean is a minimum. Recall that the $n$-skillet, denoted by $S_{n}$, is the cograph defined by $S_{n}=K_{1}+(K_{1} \cup K_{n-2})$, pictured in Fig.~\ref{skillet}.  We first demonstrate by direct proof that if $G$ is a connected cograph of order $n$ with a cut vertex, then $M_G\geq M_{S_n}$.  Then we consider the general case where $G$ is any connected cograph of order $n$; the proof is by induction.

We begin by deriving expressions for $M_{S_n}$ and $M_{K_n}$ and showing that $M_{S_n}<M_{K_n}$ for all $n\geq 3$.

\begin{lemma}\label{specialcases}
For any $n \ge 3$,
\begin{enumerate}
\item $M_{S_n} = \frac{n}{2}+ \frac{1}{3 \cdot 2^{n-2}}$,
\item $M_{K_n}= \frac{n}{2} + \frac{n}{2^{n+1}-2}$, and
\item $M_{S_n}<M_{K_n}$.
\end{enumerate}
\end{lemma}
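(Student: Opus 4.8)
The plan is to establish parts (a) and (b) by directly computing the connected induced subgraph polynomials and their logarithmic derivatives, and then to obtain (c) as an immediate algebraic consequence of the two closed forms.

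For part (a), I would first determine $\Phi_{S_n}(x)$ from the decomposition $S_n = K_1 + (K_1 \cup K_{n-2})$. Since the connected induced subgraphs of a disjoint union lie entirely in one component, $\Phi_{K_1 \cup K_{n-2}}(x) = \Phi_{K_1}(x) + \Phi_{K_{n-2}}(x) = x + \left[(1+x)^{n-2} - 1\right]$. Applying the join formula $\Phi_{G_1+G_2}(x) = \Phi_{G_1}(x) + \Phi_{G_2}(x) + \psi_{s,n-s}(x)$ with $G_1 = K_1$ (so $s=1$) and $G_2 = K_1 \cup K_{n-2}$, together with $\psi_{1,n-1}(x) = x\left[(1+x)^{n-1} - 1\right]$, I would obtain
\[
\Phi_{S_n}(x) = x(1+x)^{n-1} + (1+x)^{n-2} + x - 1.
\]
Evaluating gives $\Phi_{S_n}(1) = 3\cdot 2^{n-2}$, and differentiating and evaluating at $x=1$ gives $\Phi_{S_n}'(1) = 3n\cdot 2^{n-3} + 1$. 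Taking the ratio and splitting the fraction yields $M_{S_n} = \tfrac{n}{2} + \tfrac{1}{3\cdot 2^{n-2}}$, as claimed.

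Part (b) is more direct, since $\Phi_{K_n}(x) = (1+x)^n - 1$, whence $\Phi_{K_n}(1) = 2^n - 1$ and $\Phi_{K_n}'(1) = n\cdot 2^{n-1}$, giving $M_{K_n} = \tfrac{n\cdot 2^{n-1}}{2^n - 1}$. A short manipulation over the common denominator $2(2^n-1) = 2^{n+1}-2$ rewrites this as $\tfrac{n}{2} + \tfrac{n}{2^{n+1}-2}$. For part (c), with these closed forms in hand, I would compute
\[
M_{K_n} - M_{S_n} = \frac{n}{2^{n+1}-2} - \frac{1}{3\cdot 2^{n-2}} = \frac{(3n-8)\cdot 2^{n-2} + 2}{3\cdot 2^{n-2}(2^{n+1}-2)}.
\]
The denominator is positive, and for $n \ge 3$ the numerator satisfies $(3n-8)\cdot 2^{n-2} + 2 > 0$ since $3n - 8 \ge 1$, so $M_{S_n} < M_{K_n}$.

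There is no substantial obstacle here; the entire lemma reduces to a sequence of routine computations with binomially generated polynomials evaluated at $x=1$. The only place demanding genuine care is the derivation of $\Phi_{S_n}(x)$ in part (a), where one must correctly handle the nested union-and-join structure and, in particular, use the $M^*$-type polynomial $\psi_{1,n-1}$ for the join contribution rather than the full polynomial $\Phi_{K_{1,n-1}}$. Once that polynomial is correct, the remaining evaluations and the comparison in (c) are pure bookkeeping.
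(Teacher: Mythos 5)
Your proposal is correct and follows essentially the same route as the paper: write down $\Phi_{S_n}(x)$ and $\Phi_{K_n}(x)$ explicitly, evaluate at $x=1$ to get the two closed forms, and obtain (c) from the difference $\frac{n}{2^{n+1}-2}-\frac{1}{3\cdot 2^{n-2}} = \frac{(3n-8)2^{n-2}+2}{3\cdot 2^{n-2}(2^{n+1}-2)}>0$, which is exactly the paper's computation. Your polynomial $\Phi_{S_n}(x)=x(1+x)^{n-1}+(1+x)^{n-2}+x-1$ is correct (indeed, the paper's printed form $(1+x)^{n-1}-1+x+x^2(1+x)^{n-3}$ contains a typo --- the last exponent should be $n-2$ --- and your derivation via the join formula with $\psi_{1,n-1}$ yields the right values $\Phi_{S_n}(1)=3\cdot 2^{n-2}$ and $\Phi_{S_n}'(1)=3n\cdot 2^{n-3}+1$).
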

\begin{proof}
Parts (a) and (b) are easily obtained from the connected induced subgraph polynomials for $S_n$ and $K_n$, respectively:
\begin{align*}
\Phi_{S_n}(x) &= (1+x)^{n-1}-1 + x+x^2(1+x)^{n-3}\\
\Phi_{K_n}(x) &= (1+x)^{n}-1
\end{align*}

For part (c), we show that the difference $M_{K_n}-M_{S_n}$ is positive for $n\ge 3$.  We have
\begin{align*}
M_{K_n}-M_{S_n}=\frac{n}{2^{n+1}-2}-\frac{1}{3\cdot 2^{n-2}}=\frac{2^{n-2}(3n-8)+2}{3\cdot 2^{n-2}\cdot (2^{n+1}-2)}>0
\end{align*}
where the last inequality follows easily from the fact that $n\geq 3$.
\end{proof}

The following three lemmas on $M^{\ast}$ means of cographs will be used to prove the main result of this section.

\begin{lemma}\label{ShortLemma}
$M^{\ast}_{K_{s,n-s}} > \frac{n}{2}$ for any $n\geq 2$ and $1\leq s\leq n-1$.
\end{lemma}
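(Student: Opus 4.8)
The plan is to work directly from the closed form for $M^*_{K_{s,n-s}}$ already recorded in the text, namely $M^*_{K_{s,n-s}} = \psi'_{s,n-s}(1)/\psi_{s,n-s}(1)$, where $\psi_{s,n-s}(1)$ and $\psi'_{s,n-s}(1)$ are given by (\ref{psi1}) and (\ref{psi2}). Since the target inequality is $M^*_{K_{s,n-s}} > \tfrac{n}{2}$, I would form the difference
\[
M^*_{K_{s,n-s}} - \frac{n}{2} = \frac{\psi'_{s,n-s}(1) - \tfrac{n}{2}\,\psi_{s,n-s}(1)}{\psi_{s,n-s}(1)},
\]
and reduce the problem to analyzing the signs of the numerator and denominator separately.

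First I would dispatch the denominator. From (\ref{psi1}), $\psi_{s,n-s}(1) = 2^n - 2^{n-s} - 2^s + 1 = (2^s - 1)(2^{n-s}-1)$, which is strictly positive whenever $1 \le s \le n-1$, since then both factors are at least $1$. Thus the sign of the difference is governed entirely by the numerator. Next I would simplify that numerator: substituting (\ref{psi1}) and (\ref{psi2}), the leading terms $n2^{n-1}$ cancel, and grouping the surviving terms by powers of $2$ yields
\[
\psi'_{s,n-s}(1) - \frac{n}{2}\,\psi_{s,n-s}(1) = (n-s)\,2^{s-1} + s\,2^{n-s-1} - \frac{n}{2}.
\]

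The final step is a one-line estimate: since $s \ge 1$ and $n - s \ge 1$, we have $2^{s-1} \ge 1$ and $2^{n-s-1} \ge 1$, so $(n-s)\,2^{s-1} + s\,2^{n-s-1} \ge (n-s) + s = n$, whence the numerator is at least $\tfrac{n}{2} > 0$. Combined with the positivity of the denominator, this gives $M^*_{K_{s,n-s}} > \tfrac{n}{2}$, as required.

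I do not anticipate a genuine obstacle here. Unlike Lemmas \ref{DecreaseLemma} and \ref{LogDerivandMean}, this statement needs no induction on $n$ and no case split on $s$: once the difference is written over the common denominator $(2^s - 1)(2^{n-s}-1)$, the result follows from elementary algebra and a trivial bound. The only point requiring slight care is the bookkeeping in the cancellation that produces the clean numerator $(n-s)\,2^{s-1} + s\,2^{n-s-1} - \tfrac{n}{2}$; everything after that is immediate, and in particular one does not even need the sharper estimate $2^{s-1}\ge 1$ on both terms simultaneously—the crude bound suffices.
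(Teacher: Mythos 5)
Your proof is correct and follows essentially the same route as the paper: both form the difference $M^{\ast}_{K_{s,n-s}}-\tfrac{n}{2}$ over a common denominator, simplify it to the numerator $(n-s)2^{s}+s2^{n-s}-n$ (yours is this divided by $2$), and conclude via the trivial bounds $2^{s-1}\geq 1$, $2^{n-s-1}\geq 1$ coming from $1\leq s\leq n-1$. Your explicit factorization $\psi_{s,n-s}(1)=(2^s-1)(2^{n-s}-1)$ to justify positivity of the denominator is a small point the paper leaves implicit, but otherwise the arguments are identical.
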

\begin{proof}
We have
\begin{align*}
     M^{\ast}_{K_{s,n-s}}-\frac{n}{2} &= \frac{n2^{n-1}-s2^{s-1}-(n-s)2^{n-s-1}}{2^{n}-2^{s}-2^{n-s}+1}
     - \frac{n}{2} \\
     &= \frac{(n-s)2^{s}+s2^{n-s}-n}{2(2^{n}-2^{s}-2^{n-s}+1)} \\
     &\geq \frac{2(n-s)+2s-n}{2(2^{n}-2^{s}-2^{n-s}+1)} \\
     &>0.
\end{align*}
Hence $M^{\ast}_{K_{s,n-s}} > \frac{n}{2}$.
\end{proof}

\begin{lemma}\label{ComBipartiteLemma}
Let $G=G_{1}+G_{2}$, where $G_{1}$ and $G_{2}$ are cographs of orders $s$ and $n-s$ respectively, where $n\geq 4$ and $2 \leq s \leq n-2$. Then $M^{\ast}_{G} \le M^{\ast}_{K_{s,n-s}}$, with equality if and only if $ G \cong K_{s,n-s}$.
\end{lemma}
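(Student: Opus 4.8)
The plan is to decompose $\Phi^*_G$ exactly as in the proof of Lemma \ref{LogDeriv}, and then place the three resulting means in order using the bounds already available. Since $G=G_1+G_2$, we have $\Phi_G(x)=\Phi_{G_1}(x)+\Phi_{G_2}(x)+\psi_{s,n-s}(x)$; subtracting $nx=sx+(n-s)x$ from both sides gives
\[
\Phi^*_G(x)=\Phi^*_{G_1}(x)+\Phi^*_{G_2}(x)+\psi_{s,n-s}(x).
\]
Because $\psi_{s,n-s}(x)=\Phi^*_{K_{s,n-s}}(x)$ and each summand has nonnegative coefficients, Result \ref{ConvexComboResult} writes $M^*_G$ as a convex combination $M^*_G=c_1M^*_{G_1}+c_2M^*_{G_2}+c_3M^*_{K_{s,n-s}}$ with $c_1+c_2+c_3=1$ and all $c_i\geq 0$, where $c_1=\Phi^*_{G_1}(1)/\Phi^*_G(1)$ and $c_2=\Phi^*_{G_2}(1)/\Phi^*_G(1)$.

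The key step is to show that $M^*_{K_{s,n-s}}$ is strictly the largest of the three means appearing here. I would first apply Lemma \ref{LogDeriv} to the cographs $G_1$ (order $s$) and $G_2$ (order $n-s$) to get $M^*_{G_1}\leq M^*_{K_{1,s-1}}$ and $M^*_{G_2}\leq M^*_{K_{1,n-s-1}}$, and then bound these by $\tfrac{s+1}{2}$ and $\tfrac{n-s+1}{2}$ respectively via Lemma \ref{IncreaseLemma}. The hypotheses $2\leq s\leq n-2$ are exactly what is needed: they give $s+1<n$ and $n-s+1<n$, so both bounds fall strictly below $\tfrac{n}{2}$. Since Lemma \ref{ShortLemma} gives $M^*_{K_{s,n-s}}>\tfrac{n}{2}$, chaining these inequalities yields the strict bounds $M^*_{G_1}<M^*_{K_{s,n-s}}$ and $M^*_{G_2}<M^*_{K_{s,n-s}}$.

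With these in hand, $M^*_G$ is a convex combination in which $M^*_{K_{s,n-s}}$ is strictly the largest term, so $M^*_G\leq M^*_{K_{s,n-s}}$ follows immediately. For the equality characterization, the strict inequalities just obtained force equality to hold precisely when $c_1=c_2=0$. As $\Phi^*_G(1)>0$, this means $\Phi^*_{G_1}(1)=\Phi^*_{G_2}(1)=0$; that is, neither $G_1$ nor $G_2$ has a nontrivial connected induced subgraph, so both are edgeless. Hence $G_1\cong\overline{K_s}$ and $G_2\cong\overline{K_{n-s}}$, and therefore $G\cong K_{s,n-s}$.

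I do not anticipate a serious obstacle, since the earlier lemmas do all of the analytic work; the one point that genuinely requires care is verifying that the range $2\leq s\leq n-2$ is precisely what forces both part-bounds strictly below $\tfrac{n}{2}$. This is the hinge of the argument: it both delivers the inequality and, through the resulting \emph{strict} comparisons, pins down the unique equality case.
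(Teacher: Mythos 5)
Your decomposition and overall strategy coincide exactly with the paper's proof: the same identity $\Phi^*_G=\Phi^*_{G_1}+\Phi^*_{G_2}+\Phi^*_{K_{s,n-s}}$, the same convex-combination step via Result \ref{ConvexComboResult}, the same use of Lemma \ref{LogDeriv} on the two parts and Lemma \ref{ShortLemma} on the join term, and the same equality analysis. However, you misquote Lemma \ref{IncreaseLemma}: for $s\geq 2$ it states $\tfrac{s+1}{2}<M^*_{K_{1,s-1}}\leq\tfrac{s+2}{2}$, so your claimed bound $M^*_{K_{1,s-1}}\leq\tfrac{s+1}{2}$ is not merely unproved --- it is false, contradicting the lower bound in that very lemma. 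The correct upper bounds are $M^*_{K_{1,s-1}}\leq\tfrac{s+2}{2}$ and $M^*_{K_{1,n-s-1}}\leq\tfrac{n-s+2}{2}$.

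This also undercuts your closing remark about where the strictness comes from. With the correct bounds, the hypotheses $2\leq s\leq n-2$ give only $\tfrac{s+2}{2}\leq\tfrac{n}{2}$ and $\tfrac{n-s+2}{2}\leq\tfrac{n}{2}$, which are non-strict; the parts' means do \emph{not} land strictly below $\tfrac{n}{2}$, contrary to what you identify as the hinge of the argument. The proof still closes, exactly as in the paper, because Lemma \ref{ShortLemma} supplies the strict inequality $M^*_{K_{s,n-s}}>\tfrac{n}{2}$: chaining $M^*_{G_i}\leq\tfrac{n}{2}<M^*_{K_{s,n-s}}$ recovers the strict comparisons, after which your equality analysis ($c_1=c_2=0$ iff both parts are edgeless iff $G\cong K_{s,n-s}$) goes through verbatim. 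So the repair is one line, but as written your argument cites a false inequality and misattributes the source of strictness to the range of $s$ rather than to Lemma \ref{ShortLemma}.
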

\begin{proof}
Since $G$ is a connected cograph, observe that
\begin{align*}
     \Phi_{G}(x) - nx = \left[\Phi_{G_{1}}(x) - sx\right] + \left[\Phi_{G_{2}}(x) - (n-s)x\right] + \left[\Phi_{K_{s,n-s}}(x) - nx\right].
\end{align*}
Hence,
\begin{align}\label{ConvexCombo}
     M^{\ast}_{G} = c_{1}M^{\ast}_{G_{1}} + c_{2}M^{\ast}_{G_{2}} + c_{3}M^{\ast}_{K_{s,n-s}},
\end{align}
for some nonnegative numbers $c_1, c_2, c_3$, such that $c_1+c_2+c_3 =1$.
By Lemmas \ref{LogDeriv} and  \ref{LogDerivandMean}, and Lemma \ref{IncreaseLemma},
\begin{align*}
M^{\ast}_{G_{1}} \leq M^{\ast}_{K_{1,s-1}}\leq \tfrac{s+2}{2}\leq \tfrac{n}{2}, \mbox{ and}\\
M^{\ast}_{G_{2}} \leq M^{\ast}_{K_{1,n-s-1}}\leq \tfrac{n-s+2}{2}\leq \tfrac{n}{2}.
\end{align*}
Finally, by Lemma \ref{ShortLemma}, $M^{\ast}_{K_{s,n-s}}>\tfrac{n}{2}$, and hence $M^{\ast}_{G} < M^{\ast}_{K_{s,n-s}}$ follows from (\ref{ConvexCombo}).  Moreover, we have equality if and only if $c_1=c_2=0$, that is, if and only if $G\cong K_{s,n-s}.$
\end{proof}

\begin{lemma}\label{CompleteLemma}
$M^{\ast}_{K_{2,n-3}} \leq M_{K_{n}}$ for $n \geq 6$.
\end{lemma}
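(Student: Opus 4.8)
The plan is to reduce the claimed inequality to an elementary exponential-versus-linear inequality that can be dispatched by induction, exactly in the spirit of the preceding lemmas. First I would produce a closed form for $M^{\ast}_{K_{2,n-3}}$ directly from (\ref{psi1}) and (\ref{psi2}). Since $K_{2,n-3}$ has order $n-1$, substituting total order $n-1$ and part size $2$ into those formulas yields
\[
\psi_{2,n-3}(1)=3\left(2^{n-3}-1\right)\quad\text{and}\quad \psi'_{2,n-3}(1)=(3n-1)2^{n-4}-4,
\]
so that $M^{\ast}_{K_{2,n-3}}=\dfrac{(3n-1)2^{n-4}-4}{3\left(2^{n-3}-1\right)}$. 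For the right-hand side I would use Lemma \ref{specialcases}(b) (equivalently $\Phi_{K_n}(x)=(1+x)^n-1$) to write $M_{K_n}=\dfrac{n\,2^{n-1}}{2^n-1}$.

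Next, since both denominators are positive for $n\ge 6$, the inequality $M^{\ast}_{K_{2,n-3}}\le M_{K_n}$ is equivalent after cross-multiplying to
\[
\left((3n-1)2^{n-4}-4\right)\left(2^n-1\right)\le 3n\,2^{n-1}\left(2^{n-3}-1\right).
\]
Expanding both sides, the leading $2^{2n-4}$ contributions almost cancel (leaving a single $2^{2n-4}$), and collecting all the remaining terms at the common scale $2^{n-4}$, I expect the whole inequality to collapse to
\[
2^{2n-4}-21(n-3)2^{n-4}-4\ge 0,\qquad\text{equivalently}\qquad 2^{n-4}\left(2^n-21(n-3)\right)\ge 4.
\]

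To finish, I would establish the auxiliary bound $2^n\ge 21(n-3)+1$ for all $n\ge 6$ by a routine induction: the base case $n=6$ gives the equality $64=64$, and the inductive step follows from $2^{n+1}=2\cdot 2^n$ together with $21n-63\ge 0$. Granting this bound, for $n\ge 6$ we get $2^{n-4}\left(2^n-21(n-3)\right)\ge 2^{n-4}\ge 2^{2}=4$, which is exactly the required inequality. The computation also exhibits the sharpness: equality holds precisely at $n=6$, where indeed $M^{\ast}_{K_{2,3}}=M_{K_6}=\tfrac{64}{21}$.

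The main obstacle is pure bookkeeping---carrying out the cross-multiplication and correctly combining the terms of three different magnitudes ($2^{2n-4}$, the $2^n$-scale terms, and the constants) so that the expression genuinely collapses to $2^{2n-4}-21(n-3)2^{n-4}-4$. Once that simplification is trusted, the remaining induction is immediate and directly parallels the arguments already used for (\ref{simpeqn}) and throughout Lemma \ref{DecreaseLemma}.
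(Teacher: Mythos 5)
Your proposal is correct and follows essentially the same route as the paper: both compute $M^{\ast}_{K_{2,n-3}}=\frac{(3n-1)2^{n-4}-4}{3(2^{n-3}-1)}$ and $M_{K_n}=\frac{n2^{n-1}}{2^n-1}$, cross-multiply, and collapse the resulting expression to $2^{n-4}\left(2^{n}-21n+63\right)-4\geq 0$, finishing with the elementary induction $2^n-21n+63\geq 1$ for $n\geq 6$. Your expansion is accurate (I verified the coefficient $-24n+3n-1+64=-21n+63$), and your observation that equality holds exactly at $n=6$ is a nice touch the paper omits.
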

\begin{proof}
Consider the difference
\begin{align*}
M_{K_{n}} - M^{\ast}_{K_{2,n-3}} &= \frac{n2^{n-1}}{2^{n}-1} - \frac{2^{n-4}(3n-1)-4}{3\cdot2^{n-3}-3}\\
&=\frac{n2^{n-1}[3\cdot2^{n-3}-3] - (2^{n}-1)[2^{n-4}(3n-1)-4]}{(2^n-1)(3\cdot 2^{n-3}-3)}.
\end{align*}
Expanding and regrouping the numerator, we obtain
\begin{align*}
M_{K_{n}} - M^{\ast}_{K_{2,n-3}}      &= \frac{2^{n-4}\left(2^{n} - 21n +63\right) - 4}{(2^n-1)(3\cdot 2^{n-3}-3)}.
\end{align*}
Thus it suffices to show that $2^{n} - 21n + 63 \geq 1$ for $n \geq 6$, and this follows easily by induction.
\end{proof}

We now prove the first special case of the main result of this section, namely the case where $G$ is a connected cograph with a cut vertex. This case is further divided into the subcases where $G$ does or does not have a leaf. Let $G$ be a connected cograph of order $n$ with a cut vertex.  Then it is straightforward to show that $G=K_1 + H$ where $H$ is a disconnected cograph.  We use this structure to show that $M_G>M_{S_n}$.

\begin{lemma}\label{CutVertex}
Let $G$ be a connected cograph of order $n$. If $G$ has a cut vertex, then $M_{G}\geq M_{S_{n}},$ with equality if and only if $G\cong S_n$.
\end{lemma}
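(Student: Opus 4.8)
The plan is to exploit the structure $G = K_1 + H$ with $H$ a disconnected cograph, and to split into the two subcases mentioned in the preamble: whether or not $G$ has a leaf. First I would justify the structural claim: since a connected cograph is a join $G = G_1 + G_2$, and since a join has a cut vertex only when one of the joined pieces is a single vertex whose removal disconnects the rest, one can peel off universal vertices to write $G = K_1 + H$ where $H$ is a disconnected cograph of order $n-1$. (If $H$ were connected, $K_1 + H$ would be $2$-connected.) The universal vertex $v$ satisfies $\Phi_{G,v}(x) = x(1+x)^{n-1}$, and the decomposition in equation~(\ref{LocalGlobal})-style gives $\Phi_G(x) = \Phi_H(x) + x(1+x)^{n-1}$, so by Result~\ref{ConvexComboResult} the mean $M_G$ is a convex combination of $M_H$ and $M_{G,v} = \tfrac{n+1}{2}$.

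Next I would treat the \textbf{leaf subcase}. If $G$ has a leaf $u$, then in $H = \overline{K_1} \cup H'$ the isolated vertex of $H$ corresponds to $u$, so $H = K_1 \cup H'$ where $H'$ is a cograph of order $n-2$. Then $\Phi_H(x) = x + \Phi_{H'}(x)$, and I would bound $M_H$ from below. The extreme case $H' = K_{n-2}$ recovers the skillet $S_n = K_1 + (K_1 \cup K_{n-2})$, so the goal is to show this choice minimizes $M_G$. I expect to argue that among all cographs $H'$ of order $n-2$, replacing $H'$ by $K_{n-2}$ only decreases the global mean of $G$; this should follow by writing $M_G$ as a convex combination and comparing the relevant $M^*$ means against $M_{S_n}$, using Lemma~\ref{specialcases}(a) for the explicit value $M_{S_n} = \tfrac{n}{2} + \tfrac{1}{3\cdot 2^{n-2}}$.

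For the \textbf{no-leaf subcase}, every vertex of the disconnected cograph $H$ has degree at least $1$ within its component, so each component of $H$ has order at least $2$. Here I would use the component decomposition of $H$ together with the join structure, and compare against $M_{S_n}$ using the lemmas on $M^*$ means: Lemma~\ref{ShortLemma} ($M^*_{K_{s,n-s}} > \tfrac{n}{2}$), Lemma~\ref{ComBipartiteLemma}, and crucially Lemma~\ref{CompleteLemma} ($M^*_{K_{2,n-3}} \le M_{K_n}$), which is clearly staged to handle the boundary between a component of size $2$ and the rest. The reductions should show that the mean is minimized by making one component as small as possible (size $2$, i.e.\ $K_1$ glued on) while the remainder is complete, again landing on $S_n$. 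Throughout, the convex-combination machinery of Result~\ref{ConvexComboResult} reduces everything to comparing a fixed list of $M^*$-type quantities against the explicit target $M_{S_n}$.

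\textbf{The main obstacle} I anticipate is the no-leaf subcase, and specifically pinning down which single component structure of $H$ minimizes the mean. Unlike the maximum problem, where the convex-combination argument pushes uniformly toward one extreme, here the minimizing configuration is the delicate skillet rather than $K_n$ (recall $M_{S_n} < M_{K_n}$ by Lemma~\ref{specialcases}(c)), so the convex combination does not simply select its smallest entry — I will have to control the \emph{weights} $c_i = \Phi_i(1)/\Phi_G(1)$ as well as the means, and verify that shifting mass from a larger component to the isolated-plus-clique configuration lowers the weighted average. I expect this to require a careful case analysis on the number and sizes of the components of $H$, with Lemmas~\ref{ShortLemma}--\ref{CompleteLemma} supplying exactly the inequalities needed to close each case, and a small-$n$ base check (likely $n \le 5$ or so) handled separately since several lemmas require $n \ge 6$.
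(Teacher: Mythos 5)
Your structural setup (writing $G=K_1+H$ with $H$ a disconnected cograph of order $n-1$) and your case split (on whether $H$ has an isolated vertex, equivalently whether $G$ has a leaf) match the paper's proof exactly, and you cite the right auxiliary lemmas. But the engine of the argument is missing, and you acknowledge this yourself: your plan writes $M_G$ as a convex combination of smaller means and then requires you to ``control the weights'' to show the average stays above $M_{S_n}$. That weight analysis is precisely the hard part, it is not supplied by Lemmas~\ref{ShortLemma}--\ref{CompleteLemma} (which compare means, not weights), and it is genuinely delicate: in your leaf subcase $M_G$ is a convex combination of $1$, $M_{H'}$ and $\tfrac{n+1}{2}$, where the first two entries lie \emph{below} the target $M_{S_n}$ and the third above it, so the conclusion really does hinge on the relative sizes of $\Phi_{H'}(1)$ and $2^{n-1}$, which your outline never estimates. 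There is also an error in your no-leaf subcase: since every component of $H$ there has order at least $2$, no graph in that subcase has a leaf, so the reduction cannot ``land on $S_n$'' (which has a leaf). The correct statement, and what the paper proves, is the strictly stronger bound $M_G\geq M_{K_n}$ in that subcase, which suffices since $M_{K_n}>M_{S_n}$ by Lemma~\ref{specialcases}(c); the equality $G\cong S_n$ can only arise in the leaf subcase.

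The missing idea is complementation via Result~\ref{cographcharact}(c), which reverses the direction of the convex-combination argument so that no weight control is ever needed. Since each vertex set of size at least $2$ induces a connected subgraph in exactly one of $G$ and $\overline{G}$, one has $\Phi_{K_n}(x)=\Phi_G(x)+\Phi_{\overline{G}}(x)-nx$; in the no-leaf case $\overline{G}=K_1\cup\overline{H}$, so $M_{K_n}$ is a convex combination of $M_G$ and $M^{\ast}_{\overline{H}}$, and since $\overline{H}$ is a join of two cographs of order at least $2$, Lemmas~\ref{ComBipartiteLemma}, \ref{DecreaseLemma} and \ref{CompleteLemma} give $M^{\ast}_{\overline{H}}\leq M^{\ast}_{K_{2,n-3}}\leq M_{K_n}$ for $n\geq 6$ (with $n=5$ checked directly), forcing $M_G\geq M_{K_n}$. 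In the leaf case, $G$ is a spanning subgraph of $S_n$, and the vertex sets connected in $S_n$ but not in $G$ are exactly the nontrivial sets inducing connected subgraphs of $\overline{H-v}$, so $\Phi_{S_n}(x)=\Phi_G(x)+\Phi_{\overline{H-v}}(x)-(n-2)x$; thus $M_{S_n}$ is a convex combination of $M_G$ and $M^{\ast}_{\overline{H-v}}$, and Lemmas~\ref{LogDeriv} and \ref{IncreaseLemma} give $M^{\ast}_{\overline{H-v}}\leq M^{\ast}_{K_{1,n-3}}\leq\tfrac{n}{2}<M_{S_n}$. In both cases the \emph{target} mean, not $M_G$, is the weighted average: once the other entry is at most the target, $M_G$ must be at least the target, with equality exactly when the other entry has weight zero, which in the leaf case means $\overline{H-v}$ is edgeless, i.e.\ $G\cong S_n$. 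Without this maneuver, your outline remains a plan whose crucial step is unproven.
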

\begin{proof}
Write $G=K_1+H$, where $H$ is a disconnected cograph of order $n-1$.  We consider two cases, depending on whether or not $H$ has an isolated vertex.

Suppose first that $H$ has no isolated vertex. In this case, we must have $n\geq 5$.  If $n=5$, then $H\cong K_2\cup K_2$, and $M_G>M_{S_5}$ can be verified directly.  Now assume that $n\geq 6$. We will prove the stronger result that $M_G\geq M_{K_n}$, from which the desired result follows by Lemma \ref{specialcases}(c).  We can write
\begin{align}\label{CompleteDecomposition}
    \Phi_{K_{n}}(x) = \Phi_{G}(x)+ \Phi_{\overline{G}}(x) - nx,
\end{align}
since every nonempty subset of vertices induces a connected subgraph of $K_n$, while, by Result \ref{cographcharact}(c),  every subset of vertices of size at least $2$ induces a connected subgraph in either $G$ or $\overline{G}$, but not in both.  The singleton subsets are counted by both $\Phi_G(x)$ and $\Phi_{\overline{G}}(x)$, which is why we subtract $nx.$  Since $\overline{G} = K_{1} \cup \overline{H}$, we substitute $\Phi_{\overline{G}}(x)=\Phi_{\overline{H}}(x)+x$ into (\ref{CompleteDecomposition}) to obtain
\begin{align*}
    \Phi_{K_{n}}(x) = \Phi_{G}(x)+\left[\Phi_{\overline{H}}(x) - (n-1)x\right].
\end{align*}
Therefore, we can write $M_{K_n}$ as a convex combination as follows:
\begin{align} \label{ConvexCombo2}
     M_{K_{n}} =c_{1}M_{G} + c_{2}M^{\ast}_{\overline{H}}.
\end{align}
Since $\overline{H}$ is a connected cograph of order $n-1$, and $H$ has no isolated vertex, we must have $\overline{H}=H_1+H_2$ with $|V(H_1)|\geq 2$ and $|V(H_2)|\geq 2$.  Let $|V(H_1)|=s$. By Lemma \ref{ComBipartiteLemma}, $M^{\ast}_{\overline{H}} \leq M^{\ast}_{K_{s,n-1-s}}$.  Further, by Lemma \ref{DecreaseLemma}, $M^{\ast}_{K_{s,n-1-s}} \leq M^{\ast}_{K_{2,n-3}}$.  Finally, by Lemma \ref{CompleteLemma}, $M^{\ast}_{K_{2,n-3}} \leq M_{K_{n}}$ for $n\geq 6$. So $M^{\ast}_{\overline{H}} \leq M_{K_{n}}$, and we conclude from (\ref{ConvexCombo2}) that $M_G\geq M_{K_{n}}$.

Suppose otherwise that $H$ has an isolated vertex.  Then $G=K_{1}+H$  contains a leaf.  So $G$ is a spanning subgraph of $S_{n}$. Label the vertices of $G$ and $S_n$ with the same labels such that $v$ denotes a leaf in each graph and $u$ denotes the cut vertex of each graph. The sets of vertices that induce a connected subgraph in $S_n$ can be partitioned into two sets, namely (i) those that also induce a connected subgraph of $G$ and (ii) those that do not induce a connected subgraph of $G$. The latter are precisely those sets of vertices that induce a disconnected subgraph of $H-v$ and thus the nontrivial sets of vertices that induce a connected subgraph in $\overline{H-v}$. Hence
\begin{align*}
     \Phi_{S_{n}}(x) = \Phi_{G}(x)+\left[\Phi_{\overline{H - v}}(x) - (n-2)x\right].
\end{align*}
Thus we obtain the following convex combination:
\begin{align}\label{ConvexCombo3}
     M_{S_{n}} =  c_{1}M_{G} + c_{2}M^{\ast}_{\overline{H -v}}
\end{align}
Since $\overline{H-v}$ is a cograph of order $n-2$, Lemma \ref{LogDeriv} and Lemma \ref{IncreaseLemma} give
\begin{align*}
     M^{\ast}_{\overline{H - v}} \leq M^{\ast}_{K_{1,n-3}}\leq \tfrac{n}{2}.
\end{align*}
Finally, note that $M_{S_{n}}=\tfrac{n}{2}+\tfrac{1}{3\cdot 2^{n-2}}>\tfrac{n}{2}$. Thus $M^{\ast}_{\overline{H -v}} < M_{S_{n}}$, and we conclude from (\ref{ConvexCombo3}) that $M_{S_{n}} < M_{G}$.
\end{proof}

We now focus on the case where $G$ is a connected cograph without a cut vertex, i.e.\ $G$ is a $2$-connected cograph. In this case, $\Phi_G(x)$ can be expressed as the sum of a local generating polynomial $\Phi_{G,v}(x)$ and the generating polynomial $\Phi_{G-v}(x)$ for the connected cograph $G-v$. We require one more lemma in order to prove the main result of this section.

\begin{lemma}\label{CountingLemma}
Let $G$ be a $2$-connected cograph of order $n \geq 4.$  There is some vertex $v\in V(G)$ such that $\Phi_{G-v}(1) < \Phi_{G,v}(1)$.
\end{lemma}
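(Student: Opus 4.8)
The plan is to avoid exhibiting a specific good vertex and instead argue by an averaging trick: I will sum the quantity $\Phi_{G,v}(1)-\Phi_{G-v}(1)$ over all $v\in V(G)$ and show the total is positive, so at least one summand must be positive. For each $v$, the connected induced subgraphs of $G$ either contain $v$ or avoid $v$, the latter being exactly the connected induced subgraphs of $G-v$; hence $\Phi_{G,v}(1)+\Phi_{G-v}(1)=\Phi_G(1)$ and therefore $\Phi_{G,v}(1)-\Phi_{G-v}(1)=2\Phi_{G,v}(1)-\Phi_G(1)$. Summing over $v$ and using that each connected subgraph $S$ is counted $|S|$ times, so $\sum_v\Phi_{G,v}(1)=\Phi_G'(1)$, the whole problem reduces to showing
$$2\Phi_G'(1)-n\Phi_G(1)>0,$$
equivalently $M_G>\tfrac{n}{2}$ (density strictly above $\tfrac12$).

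To evaluate $2\Phi_G'(1)-n\Phi_G(1)$ I would pass to the complement. By Result \ref{cographcharact}(c), every vertex set of size at least $2$ is connected in exactly one of $G$ and $\overline{G}$, whereas in $K_n$ all such sets are connected; this yields $\Phi^{\ast}_{K_n}(x)=\Phi^{\ast}_G(x)+\Phi^{\ast}_{\overline{G}}(x)$. Since $G$ is a connected cograph, $\overline{G}$ is disconnected; writing its components as $A_1,\dots,A_t$ of orders $\alpha_1,\dots,\alpha_t$ gives $\Phi^{\ast}_{\overline{G}}=\sum_i\Phi^{\ast}_{A_i}$, so $\Phi^{\ast}_G=\Phi^{\ast}_{K_n}-\sum_i\Phi^{\ast}_{A_i}$. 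Substituting the explicit values of $\Phi^{\ast}_{K_n}(1)$ and $(\Phi^{\ast}_{K_n})'(1)$ and simplifying, I expect to obtain the clean identity
$$2\Phi_G'(1)-n\Phi_G(1)=n+\sum_{i=1}^t\Phi^{\ast}_{A_i}(1)\bigl(n-2M^{\ast}_{A_i}\bigr).$$

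It then suffices to show each bracket $n-2M^{\ast}_{A_i}$ is nonnegative. Each $A_i$ is a cograph, so by Lemma \ref{LogDeriv} and Lemma \ref{IncreaseLemma} we have $M^{\ast}_{A_i}\le M^{\ast}_{K_{1,\alpha_i-1}}\le\tfrac{\alpha_i+2}{2}$ (the trivial components with $\alpha_i=1$ contribute $\Phi^{\ast}_{A_i}(1)=0$ and may be ignored). The essential structural input is the bound $\alpha_i\le n-2$: a component of $\overline{G}$ of order $n-1$ would leave one vertex $w$ isolated in $\overline{G}$, hence universal in $G$, and then $G-w=\overline{A_i}$, which is disconnected since $A_i$ is a connected cograph of order $\ge 3$ — contradicting $2$-connectivity. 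With $\alpha_i\le n-2$ we get $M^{\ast}_{A_i}\le\tfrac{n}{2}$, so every bracket is nonnegative and the whole expression is at least $n>0$, forcing some $v$ with $\Phi_{G-v}(1)<\Phi_{G,v}(1)$.

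The main obstacle is establishing $M_G>\tfrac{n}{2}$ for $2$-connected cographs without circularity, since the minimality of the skillet (which would immediately give such a bound) is exactly what this lemma is being used to prove. The averaging reduction sidesteps having to name a good vertex, and the complement identity $\Phi^{\ast}_{K_n}=\Phi^{\ast}_G+\Phi^{\ast}_{\overline{G}}$ converts the target inequality into a transparent sum of nonnegative terms plus a surplus $n$. The delicate point is that the bound $M^{\ast}_{A_i}\le\tfrac{n}{2}$ is tight (equality occurs, for instance, when a component is a single edge and $n=4$), so the inequality would fail termwise were it not for the additive $n$ arising from the $K_n$ contribution; it is precisely $2$-connectivity, through the exclusion of order-$(n-1)$ components of $\overline{G}$, that keeps every term from turning negative.
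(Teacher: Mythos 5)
Your proof is correct, and it takes a genuinely different route from the paper's. The paper argues constructively: it chooses $v$ to be a vertex of maximum degree in $G=G_1+G_2$ and compares the two counts by an injection-style argument --- connected induced subgraphs of $G-v$ that meet $N(v)$ become connected subgraphs containing $v$ once $v$ is added, while those avoiding $N(v)$ (at most $2^k-1$ of them, where $k$ is the number of non-neighbours of $v$) are matched by the sets $\{v,u\}\cup X$, with $u$ a vertex of the other join factor and $X$ a nonempty set of non-neighbours of $u$ (the condition $\deg u\le\deg v$ supplies at least $2^k-1$ such sets); the singleton $\{v\}$ then forces strict inequality. You instead prove existence non-constructively by averaging: your identity $\sum_v\bigl(\Phi_{G,v}(1)-\Phi_{G-v}(1)\bigr)=2\Phi_G'(1)-n\Phi_G(1)$ reduces the lemma to the density bound $M_G>\tfrac{n}{2}$, which you obtain from the complement identity $\Phi^*_{K_n}=\Phi^*_G+\Phi^*_{\overline{G}}$ (the same identity the paper exploits in Lemma \ref{CutVertex}) together with Lemmas \ref{LogDeriv} and \ref{IncreaseLemma}. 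I verified your key formula $2\Phi_G'(1)-n\Phi_G(1)=n+\sum_i\Phi^*_{A_i}(1)\bigl(n-2M^*_{A_i}\bigr)$; it is correct, and your use of $2$-connectivity is exactly right: a component of $\overline{G}$ of order $n-1$ would make the remaining vertex $w$ universal in $G$ with $G-w=\overline{A_i}$ disconnected, i.e.\ a cut vertex, so indeed $\alpha_i\le n-2$ and every bracket is nonnegative. The trade-off: the paper's proof is elementary and entirely self-contained, using nothing from Section \ref{MaxSection}, whereas yours leans on that machinery; in exchange, yours is shorter given those lemmas, pinpoints where $2$-connectivity enters (your $C_4$ remark shows the bound $\alpha_i\le n-2$ cannot be relaxed termwise), and yields a by-product of independent interest --- a direct, non-inductive proof that every $2$-connected cograph has density strictly above $\tfrac12$, a weak form of Theorem \ref{SecondMainTheorem} obtained without the skillet induction, which also confirms that your use of the earlier lemmas involves no circularity.
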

\begin{proof}
Since $G$ is $2$-connected, there exist non-trivial cographs $G_{1}$ and $G_{2}$ such that $G=G_{1}+G_{2}$. Let $v$ be a vertex of maximum degree in $G$.  We may assume that $v \in V(G_{1})$. If $v$ is a universal vertex, then the result follows, since, in this case, there are $2^{n-1}$ connected induced subgraphs that contain $v$ and at most $2^{n-1}-1$ connected induced subgraphs in $G - v$.

Suppose now that $v$ has $k \geq 1$ non-neighbours. Let ${\cal P}_{1}$ be the set of connected induced subgraphs of $G-v$ that contain a neighbour of $v$ and ${\cal P}_{2}$ the set of connected induced subgraphs of $G-v$ that do not contain a neighbour of $v$.  If $H$ belongs to ${\cal P}_{1}$, then the vertices of $H$ together with $v$ induce a connected subgraph that contains $v$. Let ${\cal P}_{1,v}$ be the family of such connected induced subgraphs. If $H$ is in ${\cal P}_{2}$, then the vertices of $H$ belong to $V(G_{1})-N[v]$. So ${\cal P}_{2}$ has at most $2^{k}-1$ elements. Let $u \in V(G_{2})$. Since $\deg(u) \leq \deg(v)$, $u$ has at least $k$ non-neighbours that necessarily belong to $G_{2}$. Let $S$ be the set of non-neighbours of $u$. Each non-empty subset $X$ of $S$ together with $u$ and $v$ yields a connected induced subgraph that contains $v$ and does not belong to ${\cal P}_{1}$, since $\langle X \cup \{u\} \rangle$ is disconnected. Let ${\cal P}_{2,v}$ be the family of connected induced subgraphs of $G$ containing $v$ that are constructed in this manner. Note that
$| {\cal P}_{i,v} |\geq | {\cal P}_{i} |$ for $i=1,2$, and ${\cal P}_{1,v}\cap {\cal P}_{2,v}=\emptyset,$ which gives $\Phi_{G-v}(1)\leq \Phi_{G,v}(1)$. Finally, the singleton set containing $v$ induces a connected subgraph of $G$ containing $v$ and is not contained in ${\cal P}_{1,v}\cup {\cal P}_{2,v}$, which gives the desired strict inequality.
\end{proof}

\begin{theorem}\label{SecondMainTheorem}
Let $G$ be a connected cograph of order $n \ge 3$.  Then $M_G\geq \tfrac{n}{2}+\tfrac{1}{3\cdot 2^{n-2}},$ with equality if and only if $G\cong S_n.$
\end{theorem}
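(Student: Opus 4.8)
The plan is to argue by induction on $n$, splitting connected cographs of order $n$ into those with a cut vertex and those that are $2$-connected. The cut-vertex case is already settled by Lemma~\ref{CutVertex}, which gives $M_G \geq M_{S_n}$ with equality precisely when $G \cong S_n$. Since $S_n$ itself has a cut vertex (the universal vertex of $K_1+(K_1\cup K_{n-2})$ separates the isolated vertex from the clique $K_{n-2}$), it suffices to prove the strict inequality $M_G > M_{S_n}$ whenever $G$ is $2$-connected. The base case $n=3$ can be checked directly: the only connected cographs of order $3$ are $K_3$ and $S_3\cong P_3$, and $M_{K_3}>M_{S_3}$ by Lemma~\ref{specialcases}(c).

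For the inductive step, let $n \geq 4$ and suppose the result holds for all connected cographs of order less than $n$; let $G$ be a $2$-connected cograph of order $n$. Since $G$ is $2$-connected, $G-v$ is a connected cograph of order $n-1$ for every vertex $v$, so partitioning the connected induced subgraphs of $G$ according to whether or not they contain $v$ yields
\[
\Phi_G(x)=\Phi_{G,v}(x)+\Phi_{G-v}(x).
\]
By Result~\ref{ConvexComboResult}, $M_G=c_1M_{G,v}+c_2M_{G-v}$ where $c_1=\Phi_{G,v}(1)/\Phi_G(1)$ and $c_2=\Phi_{G-v}(1)/\Phi_G(1)$ are nonnegative with $c_1+c_2=1$. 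I would now choose $v$ to be the vertex supplied by Lemma~\ref{CountingLemma}, so that $\Phi_{G-v}(1)<\Phi_{G,v}(1)$, i.e.\ $c_2<c_1$, forcing $c_1>\tfrac12$.

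The heart of the argument is a lower bound on the two means. By Theorem~\ref{LocalMean}, $M_{G,v}\geq\tfrac{n+1}{2}$, and by the induction hypothesis $M_{G-v}\geq M_{S_{n-1}}=\tfrac{n-1}{2}+\tfrac{1}{3\cdot 2^{n-3}}$. Since $\tfrac{n+1}{2}>M_{S_{n-1}}$ for $n\geq 4$, the quantity $c_1\cdot\tfrac{n+1}{2}+c_2\cdot M_{S_{n-1}}$ is strictly increasing in $c_1$ along $c_2=1-c_1$. The key observation is that its value at the balance point $c_1=c_2=\tfrac12$ is exactly
\[
\tfrac12\left(\tfrac{n+1}{2}+\tfrac{n-1}{2}+\tfrac{1}{3\cdot 2^{n-3}}\right)=\tfrac{n}{2}+\tfrac{1}{3\cdot 2^{n-2}}=M_{S_n}.
\]
Because Lemma~\ref{CountingLemma} gives $c_1>\tfrac12$ strictly, we conclude
\[
M_G=c_1M_{G,v}+c_2M_{G-v}\geq c_1\cdot\tfrac{n+1}{2}+c_2\cdot M_{S_{n-1}}>M_{S_n},
\]
completing the $2$-connected case and hence the induction.

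I expect the main obstacle to be not any single calculation but rather assembling the pieces so that the strict inequality is extracted correctly. The induction bound $M_{G-v}\geq M_{S_{n-1}}$ is by itself \emph{weaker} than the target, since $M_{S_{n-1}}<M_{S_n}$; the argument genuinely relies on pairing it with the local-mean bound of Theorem~\ref{LocalMean} and, crucially, on the counting inequality of Lemma~\ref{CountingLemma} to guarantee that the larger mean $M_{G,v}$ receives weight exceeding $\tfrac12$. Recognizing that the equal-weight average of $\tfrac{n+1}{2}$ and $M_{S_{n-1}}$ telescopes exactly to $M_{S_n}$ is precisely what makes the induction close, and the equality characterization then follows because the $2$-connected case is strict while the cut-vertex case of Lemma~\ref{CutVertex} already isolates $S_n$ as the unique minimizer.
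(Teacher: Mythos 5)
Your proof is correct and takes essentially the same route as the paper's: induction on $n$, with the cut-vertex case dispatched by Lemma~\ref{CutVertex}, the base case $n=3$ checked directly, and the $2$-connected case handled by combining Lemma~\ref{CountingLemma}, Theorem~\ref{LocalMean}, the induction hypothesis on $G-v$, and the telescoping identity $M_{S_n}=\tfrac{1}{2}\cdot\tfrac{n+1}{2}+\tfrac{1}{2}M_{S_{n-1}}$. The only (minor) difference is the order of operations in the final estimate: you first replace $M_{G,v}$ and $M_{G-v}$ by their lower bounds and then use $c_1>\tfrac{1}{2}$ together with $\tfrac{n+1}{2}>M_{S_{n-1}}$, whereas the paper first shifts weight to the larger mean (which requires invoking Corollary~\ref{FirstMainTheorem} to get $M_{G-v}<\tfrac{n}{2}$ and hence $M_{G,v}>M_{G-v}$) and only then applies the bounds — your ordering is a slight streamlining of the same argument.
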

\begin{proof}
We begin by deriving a useful expression for $M_{S_n}.$  Let $u$ be a vertex of degree $n-2$ in $S_n$. First note that
\begin{align*}
     \Phi_{S_{n}}(x) = \Phi_{S_{n}, u}(x) + \Phi_{S_{n-1}}(x).
\end{align*}
Taking the logarithmic derivative of both sides, we obtain
\begin{align*}
     M_{S_{n}} = \left[ \frac{\Phi_{S_n,u}(1)}{\Phi_{S_{n},u}(1)+\Phi_{S_{n-1}}(1)} \right] M_{S_{n},u}
     + \left[\frac{\Phi_{S_{n-1}}(1)}{\Phi_{S_{n},u}(1)+\Phi_{S_{n-1}}(1)} \right] M_{S_{n-1}}.
\end{align*}
One easily verifies that $\Phi_{S_n,u}(1) = \Phi_{S_{n-1}}(1) = 2^{n-2}+2^{n-3}$ and $M_{S_n,u}=\tfrac{n+1}{2},$ which gives
\begin{align}\label{convexcombination}
     M_{S_{n}}=\tfrac{1}{2}\cdot \tfrac{n+1}{2}+\tfrac{1}{2}M_{S_{n-1}}
\end{align}

We now proceed with the proof of the statement by induction on $n$. The result is easily verified for $n=3$ as $M_{K_3}>M_{S_3}$ and these are the only distinct connected cographs of order $3$. Suppose now that $n >3$ and that the statement holds for all connected cographs of order $k$, $3 \le k < n$. If $G$ has a cut vertex, then we are done, by Lemma \ref{CutVertex}.  So we may assume that $G$ is $2$-connected.

By Lemma \ref{CountingLemma} there is a vertex $v \in V(G)$ such that $\Phi_{G, v}(1) > \Phi_{G - v}(1)$. We express $M_{G}$ as follows:
\begin{align*}
     M_{G} = \left[ \frac{\Phi_{G, v}(1)}{\Phi_{G, v}(1)+\Phi_{G - v}(1)} \right] M_{G, v} +
     \left[  \frac{\Phi_{G-v}(1)}{\Phi_{G, v}(1)+\Phi_{G - v}(1)} \right] M_{G - v}.
\end{align*}
From our choice of $v$, it follows that the coefficient of $M_{G,v}$ is greater than $\tfrac{1}{2}$, while the coefficient of $M_{G-v}$ is less than $\tfrac{1}{2}$.  Since $M_{G,v}\geq \tfrac{n+1}{2}$, by Theorem \ref{LocalMean}, and $M_{G-v}<\tfrac{n}{2}$, by Corollary \ref{FirstMainTheorem}, we have $M_{G,v}>M_{G-v}$. Thus
\begin{align*}
     M_{G} &> \tfrac{1}{2}M_{G,v} + \tfrac{1}{2}M_{G - v}
     \geq \tfrac{1}{2} \cdot \tfrac{n+1}{2} + \tfrac{1}{2}M_{G - v}.
\end{align*}
Applying the induction hypothesis to $G-v$ gives
\begin{align*}
M_G >\tfrac{1}{2}\cdot\tfrac{n+1}{2}+\tfrac{1}{2}M_{S_{n-1}}=M_{S_n},
\end{align*}
where the last equality is due to (\ref{convexcombination}).
\end{proof}

It follows easily from our results that the global means of connected cographs increase with order.

\begin{corollary}\label{ComparingOrder}
Let $G_1$ be a connected cograph of order $n_1\geq 2$, and let $G_2$ be a cograph (not necessarily connected) of order $n_2<n_1$. Then $M_{G_1} > M_{G_2}$.
\end{corollary}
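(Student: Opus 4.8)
The plan is to sandwich the two means between explicit bounds that depend only on the orders, and then observe that these bounds cannot overlap. First I would bound $M_{G_1}$ from below using Theorem~\ref{SecondMainTheorem}: for a connected cograph of order $n_1 \ge 3$, that result gives
\[
M_{G_1} \ge \frac{n_1}{2} + \frac{1}{3\cdot 2^{n_1-2}} > \frac{n_1}{2}.
\]
The only feature I need from this is the strict inequality $M_{G_1} > \tfrac{n_1}{2}$, which comes for free from the positivity of the correction term.

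Next I would bound $M_{G_2}$ from above. Since $G_2$ is a cograph of order $n_2$, every component of $G_2$ has order at most $n_2$, so Corollary~\ref{DisconnectedUpperBound} applied with $s = n_2$ yields $M_{G_2} \le \tfrac{n_2+1}{2}$. Because $n_2 < n_1$ with both integers, we have $n_2 + 1 \le n_1$, hence $\tfrac{n_2+1}{2} \le \tfrac{n_1}{2}$. Chaining the two bounds gives
\[
M_{G_1} > \frac{n_1}{2} \ge \frac{n_2+1}{2} \ge M_{G_2},
\]
which is exactly the claimed strict inequality. Note that the upper bound on $M_{G_2}$ already absorbs the possibility that $G_2$ is disconnected, so no separate treatment of components is needed beyond invoking Corollary~\ref{DisconnectedUpperBound}.

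The remaining point requiring care is the edge case $n_1 = 2$, since Theorem~\ref{SecondMainTheorem} is stated only for orders at least $3$. Here $n_2 < 2$ forces $n_2 = 1$, so $G_2 \cong K_1$ and $M_{G_2} = 1$, while the only connected cograph of order $2$ is $K_2$, for which $M_{K_2} = 4/3 > 1$. I do not expect any genuine obstacle in this argument: all of the real work resides in the earlier results, and the proof amounts to the observation that the universal lower bound $\tfrac{n}{2}$ for connected cographs of order $n$ strictly exceeds the universal upper bound $\tfrac{n+1}{2}$ for arbitrary cographs of order one less. The only thing to watch is that the small case and the junction between the strict and non-strict steps of the displayed chain are handled explicitly.
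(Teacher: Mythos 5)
Your proof is correct and follows essentially the same route as the paper's: both handle $n_1=2$ by direct verification, then for $n_1\geq 3$ chain the lower bound $M_{G_1}>\tfrac{n_1}{2}$ from Theorem~\ref{SecondMainTheorem} against the upper bound $M_{G_2}\leq\tfrac{n_2+1}{2}$ from Corollary~\ref{DisconnectedUpperBound}. If anything, your version is slightly more careful than the paper's, which writes the final link $\tfrac{n_2+1}{2}>M_{G_2}$ as strict (false when $n_2=1$), whereas you correctly keep it non-strict and let the strictness come from the first step.
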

\begin{proof}
The result is easily verified for $n_1=2$, so we may assume that $n_1\geq 3.$  From  Theorem \ref{SecondMainTheorem} and Corollary \ref{DisconnectedUpperBound}, we know that $M_{G_1} \geq M_{S_{n_1}} > \tfrac{n_1}{2},$ and $M_{G_2}\leq \tfrac{n_2+1}{2}$, respectively.  Since $n_1>n_2$, it follows that $M_{G_1}>\tfrac{n_1}{2}\geq \tfrac{n_2+1}{2}>M_{G_2}.$
\end{proof}

\section{Disconnected cographs}\label{DisconnectedSection}

In this section we investigate extremal structures for the mean order of connected induced subgraphs in disconnected cographs.
For any order $n $, the edgeless cograph   $\overline{K_{n}}$ has minimum mean. We show that the maximum mean among all disconnected cographs of order $n\geq 2$ is obtained by the disjoint union of $K_1$ and the cograph of maximum mean among all connected cographs of order $n-1$.  In particular, for $n\geq 8$, the cograph $K_1\cup K_{1,n-2}$ has maximum mean among all disconnected cographs of order $n$.

We will refer to the expressions given below for various means, which are easily verified.
\begin{lemma}\label{Means}
The following formulae hold.
\begin{enumerate}
    \item For $n \ge 3$, $M_{K_{1} \cup K_{1,n-2}} = \frac{(n-1)+n2^{n-3}}{(n-1)+2^{n-2}}=\frac{n}{2} + \frac{\frac{3n}{2}-\frac{n^{2}}{2}-1}{2^{n-2}+(n-1)}.$
    \item For $n \ge 4$, $M^{\ast}_{K_{1,n-3}} = \frac{(n-1)2^{n-4}-1}{2^{n-3}-1} =\frac{n}{2} + \frac{\frac{n}{2}-2^{n-4}-1}{2^{n-3}-1}.$
    \item For $n \ge 4$, $M_{K_{2,n-3}}= \frac{3n2^{n-4}-2^{n-4}+n-5}{3 \cdot 2^{n-3}+n-4}=\frac{n}{2} + \frac{-2^{n-4}+3n-5-\frac{n^{2}}{2}}{n + 3 \cdot 2^{n-3} - 4}.$
    \item For $n \ge 4$, $M_{K_{1,n-3}} = \frac{(n-3)+(n-1)2^{n-4}}{(n-3)+2^{n-3}}=\frac{n}{2} + \frac{\frac{5n}{2} - \frac{n^{2}}{2}-3-2^{n-4}}{2^{n-3}+n-3}.$
\end{enumerate}
\end{lemma}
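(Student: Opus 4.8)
The plan is to verify all four identities by the same uniform recipe, since each is a mean (or $M^*$ mean) of an explicit, structurally simple cograph. For a fixed graph, I would (i) write down the relevant generating polynomial in closed form, (ii) evaluate that polynomial and its derivative at $x=1$, and (iii) form the logarithmic derivative. Step (iii) immediately yields the first equality in each line (the ``raw'' ratio), and the second equality (the $\tfrac{n}{2}+\cdots$ form) would then follow by subtracting $\tfrac n2$, putting everything over a common denominator, and simplifying. Throughout I would lean on the ingredients already assembled earlier: the star polynomial $\Phi_{K_{1,m}}(x)=x(1+x)^{m}+mx$ (implicit in the formula for $\Phi_{K_{1,n-1}}$ at the start of Section~\ref{MaxSection}); the decomposition $\Phi_{K_{s,n-s}}(x)=nx+\psi_{s,n-s}(x)$ together with the closed forms \eqref{psi1} and \eqref{psi2} for $\psi_{s,n-s}(1)$ and $\psi'_{s,n-s}(1)$; and the additivity $\Phi_{G}(x)=\sum_i \Phi_{G_i}(x)$ over components of a disconnected graph.

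Concretely, for part (a) the union $K_{1}\cup K_{1,n-2}$ contributes an isolated-vertex term $x$ on top of the star polynomial with $m=n-2$, so $\Phi_{K_{1}\cup K_{1,n-2}}(x)=x(1+x)^{n-2}+(n-1)x$; evaluating at $x=1$ gives denominator $2^{n-2}+(n-1)$ and a numerator $n2^{n-3}+(n-1)$ after differentiating the product. Part (d) is the star polynomial directly with $m=n-3$ (a graph of order $n-2$). Part (b) is its $M^{*}$ variant: subtracting $(n-2)x$ collapses the star polynomial to $x[(1+x)^{n-3}-1]$, whose value and derivative at $x=1$ are $2^{n-3}-1$ and $(n-1)2^{n-4}-1$. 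Part (c) is the only one requiring $\psi$: with $s=2$ and ambient order $n-1$, I would read off $\psi_{2,n-3}(1)=3\cdot 2^{n-3}-3$ and $\psi'_{2,n-3}(1)=(n-1)2^{n-2}-(n-3)2^{n-4}-4=(3n-1)2^{n-4}-4$ from \eqref{psi1} and \eqref{psi2}, then add the $(n-1)x$ term. For the second (annotated) equalities, subtracting $\tfrac n2$ causes the leading $2^{n}$-scale terms to cancel between $M$ and $\tfrac n2$, leaving in the numerator at most a single residual exponential ($2^{n-3}$ or $2^{n-4}$) together with a quadratic polynomial in $n$; in part (a) even the residual exponential cancels and the numerator reduces to $(n-1)(2-n)/2=\tfrac{3n}{2}-\tfrac{n^{2}}{2}-1$.

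There is no genuine conceptual obstacle here; the only real difficulty is bookkeeping, and that is precisely where I would be most careful. The formulae are all indexed by the \emph{ambient} parameter $n$ rather than by the order of the graph in question (the four graphs have orders $n$, $n-2$, $n-1$, and $n-2$ respectively, reflecting the roles these graphs play as complements or vertex-deleted subgraphs in Sections~\ref{MinSection} and~\ref{DisconnectedSection}), so the exponents $2^{n-3}$ and $2^{n-4}$ arise from shifted binomial powers and must be tracked faithfully through the product rule and through the $\psi$ evaluations. I would take extra care deriving the second forms rather than treating them as cosmetic, since their numerators directly encode the sign of $M-\tfrac n2$, which is exactly the feature exploited when these expressions are invoked later.
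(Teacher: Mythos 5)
Your proposal is correct: all four polynomial evaluations, derivatives, and the resulting simplifications (including the $\tfrac{n}{2}+\cdots$ forms) check out, and your attention to the ambient-order bookkeeping (the graphs having orders $n$, $n-2$, $n-1$, $n-2$) is exactly the right point of care. This is precisely the direct verification the paper intends, since it states these formulae are ``easily verified'' and gives no further proof.
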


In order to establish our main result we begin by proving some useful inequalities between the means described in Lemma \ref{Means}.

\begin{lemma}\label{MeanInequalities}
The following inequalities hold.
\begin{enumerate}
\item $M^{\ast}_{K_{1,n-3}} < M_{K_{1} \cup K_{1,n-2}}$ for $n\geq 8$.
\item $M_{K_{2,n-3}} < M_{K_{1} \cup K_{1,n-2}}$ for $n\geq 9$.
\item $M_{K_{1,n-3}} < M_{K_{1} \cup K_{1,n-2}}$ for $n \geq 4$.
\end{enumerate}
\end{lemma}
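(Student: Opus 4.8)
The plan is to prove all three inequalities by reducing each to a single-variable inequality in $n$ and then verifying positivity of the relevant difference by induction, exactly in the style already used throughout the paper. Since Lemma \ref{Means} provides closed-form expressions for each of the four means as $\tfrac{n}{2}$ plus an explicit error term, the most efficient approach is to subtract the two relevant expressions, clear denominators, and show that the resulting numerator has a fixed sign for the stated range of $n$.

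For part (c), the comparison $M_{K_{1,n-3}} < M_{K_1 \cup K_{1,n-2}}$ should be the easiest, so I would dispatch it first. Using the forms from Lemma \ref{Means}(a) and \ref{Means}(d), both means equal $\tfrac{n}{2}$ plus an error term, so it suffices to compare the error terms, i.e.\ to show
\[
\frac{\tfrac{5n}{2}-\tfrac{n^2}{2}-3-2^{n-4}}{2^{n-3}+n-3} < \frac{\tfrac{3n}{2}-\tfrac{n^2}{2}-1}{2^{n-2}+n-1}.
\]
Cross-multiplying by the (positive) denominators reduces this to a polynomial-plus-exponential inequality in $n$; I expect the dominant exponential terms to force the correct sign, with a routine induction handling the base cases from $n=4$ onward. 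For parts (a) and (b) I would proceed identically: subtract the error term of $M^*_{K_{1,n-3}}$ (resp.\ $M_{K_{2,n-3}}$) from that of $M_{K_1\cup K_{1,n-2}}$, clear the common denominator, and isolate a single expression of the form $2^{cn} - (\text{polynomial in }n)$ whose positivity for $n\geq 8$ (resp.\ $n\geq 9$) follows by induction.

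The main obstacle will be bookkeeping rather than conceptual difficulty: the numerators after cross-multiplication involve products of terms like $2^{n-2}$ and $2^{n-3}$ against quadratics in $n$, so the algebra is error-prone and the threshold on $n$ (namely $8$, $9$, and $4$ respectively) arises precisely because the quadratic correction terms are still comparable to the leading exponential for small $n$. I would therefore organize each part by first writing the difference over a common denominator, then factoring out the largest possible power of $2$ to leave a manifestly increasing dominant term against a bounded-degree polynomial, and finally anchoring the induction at the stated threshold value of $n$ with a direct numerical check. Where the algebra becomes tedious, I would follow the paper's own convention and state that the remaining induction ``follows easily,'' citing the analogous computations in Lemmas \ref{specialcases} and \ref{CompleteLemma}.
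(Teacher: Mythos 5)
Your proposal follows essentially the same route as the paper's own proof: both use the $\tfrac{n}{2}$-plus-error-term forms from Lemma \ref{Means}, reduce each inequality to comparing the error terms, cross-multiply over the positive common denominator, factor $2^{n-4}$ out of the resulting numerator to leave an expression of the form $2^{n-2}$ minus a quadratic in $n$, and finish by induction anchored at the stated threshold. The plan is correct as stated, and the paper's computations confirm that the algebra works out exactly as you predict.
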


\begin{proof}
(a) From Lemma \ref{Means},  $M^{\ast}_{K_{1,n-3}} < M_{K_{1} \cup K_{1,n-2}}$ if and only if
\begin{align*}
     \frac{\frac{3n}{2}-\frac{n^{2}}{2}-1}{2^{n-2}+(n-1)} - \frac{\frac{n}{2}-2^{n-4}-1}{2^{n-3}-1}
     > 0.
\end{align*}
The numerator of this expression, given by
\begin{align*}
     &\ \ \ \left( \frac{3n}{2} - \frac{n^{2}}{2} - 1 \right) \left(2^{n-3}-1 \right) - \left(\frac{n}{2} - 2^{n-4} -1
     \right) \left(2^{n-2}+n-1\right),
\end{align*}
simplifies to  $2^{n-4} \left[ 2^{n-2} -(n^2-2n-1) \right]$.  It follows readily by induction, that $2^{n-2} +2n +1 -n^{2} > 0$ for $n \geq 8$.

\noindent
(b) From Lemma \ref{Means},  $M_{K_{2,n-3}} < M_{K_{1} \cup K_{1,n-2}}$ if and only if
\[ \frac{\frac{3n}{2} - \frac{n^{2}}{2} - 1}{2^{n-2} + (n-1)} -
     \frac{-2^{n-4}+3n-5-\frac{n^{2}}{2}}{n + 3\cdot2^{n-3} - 4} > 0. \]
The numerator of this expression, given by
\[ \left(\frac{3n}{2} - \frac{n^{2}}{2} - 1 \right) \left(n+ 3 \cdot 2^{n-3} -4 \right) -
     \left(2^{n-2}+n-1\right) \left(-2^{n-4} +3n - 5 - \frac{n^{2}}{2} \right),\]
 simplifies to $2^{n-4} \left[2^{n-2}-2n-n^{2}+13+ \frac{n-1}{2^{n-4}} \right].$  It follows readily by induction on $n \ge 9$, that
$2^{n-2}-2n-n^{2}+13 > 0$.

\noindent
(c) From Lemma \ref{Means},  $M_{K_{1,n-3}} < M_{K_{1} \cup K_{1,n-2}}$ if and only if
\[  \frac{\frac{3n}{2} - \frac{n^{2}}{2} - 1}{2^{n-2} +(n-1)} -
     \frac{\frac{5n}{2} - \frac{n^{2}}{2}-3-2^{n-4}}{2^{n-3}+n-3} > 0. \]
The numerator of this expression, given by
\[ \left( \frac{3n}{2} - \frac{n^{2}}{2} -1 \right) \left(2^{n-3}+n-3 \right) -
     \left(2^{n-2}+n-1\right) \left(\frac{5n}{2} - \frac{n^{2}}{2} - 3 - 2^{n-4} \right), \]
     simplifies to $2^{n-4} \left[2^{n-2} + n^{2} +9 -6n \right].$  Once again it is readily shown by induction on $n \ge 4$, that $2^{n-2} + n^{2} + 9 -6n > 0$.
\end{proof}

Now we are ready to prove the main result of this section.

\begin{theorem}
For each $n\geq 1$, let $Q_n$ denote the cograph of maximum mean among all connected cographs of order $n$.  If $G$ is a disconnected cograph of order $n \geq 2$, then $M_{G} \leq M_{K_{1} \cup Q_{n-1}}$.
\end{theorem}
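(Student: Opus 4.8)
The plan is to exploit the identity $Q_{n-1}=K_{1,n-2}$, valid for all $n-1\ge 7$ by Corollary \ref{FirstMainTheorem}; the finitely many remaining orders $2\le n\le 8$ (where $Q_{n-1}$ is read from Table \ref{MaxTable} and is not always a star) I would dispatch by direct computation, so I assume $n\ge 9$. Let $G$ be a disconnected cograph of order $n$ with components $G_1,\dots,G_k$, $k\ge 2$. Since $\Phi_G=\sum_i\Phi_{G_i}$, Result \ref{ConvexComboResult} writes $M_G$ as a convex combination of the component means $M_{G_i}$, so in particular $M_G\le\max_i M_{G_i}$. I would then split on the order $m$ of a largest component, noting $m\le n-1$ since $G$ is disconnected.

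In the case $m\le n-2$, every component is a connected cograph of order at most $n-2$, so $M_{G_i}\le M_{Q_{n_i}}\le M_{Q_{n-2}}=M_{K_{1,n-3}}$, where the middle inequality is Corollary \ref{ComparingOrder}. Hence $M_G\le M_{K_{1,n-3}}$, and Lemma \ref{MeanInequalities}(c) gives $M_{K_{1,n-3}}<M_{K_1\cup K_{1,n-2}}=M_{K_1\cup Q_{n-1}}$, so this case is settled with strict inequality.

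In the case $m=n-1$, the remaining components have total order $1$, forcing $G=K_1\cup G'$ with $G'$ a connected cograph of order $n-1$; write $G'=A+B$ by Result \ref{cographcharact}. Note first that $M_G\le M_{G'}$, since the mean of $K_1\cup G'$ is a convex combination of $M_{K_1}=1\le M_{G'}$ and $M_{G'}$. If $|A|\ge 2$ and $|B|\ge 2$, then Theorem \ref{SecondMainLemma} followed by Theorem \ref{FirstMainLemma} yields $M_{G'}\le M_{K_{2,n-3}}$, and Lemma \ref{MeanInequalities}(b) finishes this subcase. The decisive subcase is $G'=K_1+B$ with $B$ a cograph of order $n-2$. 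Writing $\Phi_B(x)=(n-2)x+\Phi^*_B(x)$ and using that the universal vertex of $K_1+B$ contributes $x(1+x)^{n-2}$, I would compute
\[
\Phi_G(x)=x+x(1+x)^{n-2}+\Phi_B(x)=\Phi_{K_1\cup K_{1,n-2}}(x)+\Phi^*_B(x).
\]
By Result \ref{ConvexComboResult} this exhibits $M_G$ as a convex combination of $M_{K_1\cup K_{1,n-2}}$ and the $M^*$-mean $M^*_B$. Since Lemma \ref{LogDeriv} gives $M^*_B\le M^*_{K_{1,n-3}}$ and Lemma \ref{MeanInequalities}(a) gives $M^*_{K_{1,n-3}}<M_{K_1\cup K_{1,n-2}}$, I conclude $M_G\le M_{K_1\cup K_{1,n-2}}$, with equality exactly when $\Phi^*_B=0$, i.e.\ when $B$ is edgeless and $G\cong K_1\cup K_{1,n-2}=K_1\cup Q_{n-1}$.

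I expect the main obstacle to be precisely this last subcase $G=K_1\cup(K_1+B)$, the near-extremal configuration, where $M_G$ can approach $M_{K_1\cup Q_{n-1}}$ and the crude bound $M_G\le M_{G'}$ is too weak, so the comparison must be made exactly. The key move is the decomposition displayed above, which isolates the \emph{nontrivial} part $\Phi^*_B$ and thereby reduces the problem to comparing the $M^*$-mean of an order-$(n-2)$ cograph with $M_{K_1\cup K_{1,n-2}}$; this is exactly why Lemma \ref{MeanInequalities}(a) is stated for $M^*_{K_{1,n-3}}$ rather than an ordinary mean. The genuine (elementary but delicate) estimates all live inside Lemma \ref{MeanInequalities}, and the essential structural insight is to identify the correct runner-up cographs to compare against: $K_{1,n-3}$ at order $n-2$ and $K_{2,n-3}$ at order $n-1$, together with the $M^*$-extremal star of order $n-2$.
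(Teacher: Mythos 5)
Your proof is correct and takes essentially the same route as the paper's: the same computational reduction to $n\ge 9$, the same convex-combination split over components, the same comparison with $M_{K_{1,n-3}}$ when every component has order at most $n-2$, and, in the decisive subcase $G=K_1\cup(K_1+B)$, the identical decomposition $\Phi_G(x)=\Phi_{K_1\cup K_{1,n-2}}(x)+\Phi^*_B(x)$ finished by Lemma \ref{LogDeriv} and Lemma \ref{MeanInequalities}(a). The only divergence is cosmetic: in the subcase where both join parts of the order-$(n-1)$ component have at least two vertices, you bound $M_G\le M_{G'}$ and invoke Theorem \ref{SecondMainLemma}, whereas the paper writes out a three-term convex combination involving $M_{K_1\cup K_{s,n-s-1}}$; both rest on Theorem \ref{FirstMainLemma} and Lemma \ref{MeanInequalities}(b).
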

\begin{proof}
We refer the reader to Table \ref{MaxTable} and Theorem \ref{FirstMainTheorem} for an explicit description of $Q_n$ for all $n$.  Using a computer algebra system, we have verified the statement for $n\leq 8$.

We may now assume that $n\geq 9$.  Suppose $G$ is a disconnected cograph of order $n\geq 9$, and we wish to show that $M_G\leq M_{K_1\cup K_{1,n-2}}$.  Let $H_{1},...,H_{k}$ denote the components of $G$. Then
\[\Phi_{G}(x)= \Phi_{H_{1}}(x) + ... + \Phi_{H_{k}}(x) \]
Hence $M_{G}$ can be expressed as a convex combination as follows:
\[M_{G} = c_{1}M_{H_{1}} + ... + c_{k}M_{H_{k}}. \]
Let $M^{'} = \max\{M_{H_{i}}: i \in \{1,...,k\}\}$. Then $M_{G} \leq M^{'}$. Let $H_i$ be such that $M_{H_{i}} = M^{'}$.
By Corollary \ref{ComparingOrder}, $H_{i}$ is a component of $G$ of largest order.

\noindent{\bf Case 1:} Suppose $G$ does not have a component of order $n-1$. Then the largest possible order of a component of $G$ is $n-2$, and by Corollary \ref{FirstMainTheorem}, it follows that $M_{G} < M_{K_{1,n-3}}$. By Lemma \ref{MeanInequalities}(c),
$M_{K_{1,n-3}} < M_{K_{1} \cup K_{1,n-2}}.$ Hence $M_{G} < M_{K_{1} \cup K_{1,n-2}}$.

\noindent{\bf Case 2:} Suppose $G$ has a component of order $n-1$. It follows that $G=K_{1} \cup H_{2}$,
where $H_{2}$ is a connected cograph of order $n-1$. In this case
\begin{align*}
     \Phi_{G}(x)=x + \Phi_{H_{2}}(x)
\end{align*}
{\bf Subcase 1:} Suppose $H_{2}$ has a universal vertex $v$. Then $K_{1,n-2}$ is a spanning subgraph of $H_{2}$. Thus
\begin{align*}
     \Phi_{G}(x) &= x + \Phi_{K_{1,n-2}}(x) + \Phi_{H_{2}-v}(x) - (n-2)x \\
     &= \Phi_{K_{1} \cup K_{1,n-2}}(x) + \Phi_{H_{2}-v}(x) - (n-2)x.
\end{align*}
Since $H_{2}-v$ is a cograph of order $n-2$, it follows that
\begin{align*}
     M_{G} = c_{1}M_{K_{1} \cup K_{1,n-2}} + c_{2}M^{\ast}_{H_{2}-v},
\end{align*}
and by Lemma \ref{LogDeriv}, $M^{\ast}_{H_{2}-v} < M^{\ast}_{K_{1,n-3}}$. By Lemma
\ref{MeanInequalities}(a), $M^{\ast}_{K_{1,n-3}} < M_{K_{1} \cup K_{1, n-2}}$. Hence
\begin{align*}
     M_{G} < M_{K_{1} \cup K_{1,n-2}}.
\end{align*}
{\bf Subcase 2:} Suppose $H_{2}$ does not have a universal vertex. Since $H_{2}$ is connected,
$H_{2}=G_{1}+G_{2}$, where $G_{1}$ and $G_{2}$ are cographs of orders $s$ and $n-s-1$ respectively, where $2 \leq s \leq \lfloor \frac{n}{2} \rfloor$. Thus
\begin{align*}
     \Phi_{G}(x) = (x + \Phi_{K_{s,n-s-1}}(x)) + (\Phi_{G_{1}}(x) - sx) + (\Phi_{G_{2}}(x) - (n-s-1)x).
\end{align*}
Hence
\begin{align*}
     M_{G} = c_{1}M_{K_{1} \cup K_{s,n-s-1}} + c_{2}M^{\ast}_{G_{1}} +c_{3}M^{\ast}_{G_{2}}.
\end{align*}
Since $G_{1}$ and $G_{2}$ are cographs of order at most $n-3$, it follows from Lemmas \ref{IncreaseLemma} and \ref{LogDeriv} that
$M^{\ast}_{G_{1}}, M^{\ast}_{G_{2}} < M^{\ast}_{K_{1,n-4}} < M^{\ast}_{K_{1,n-3}}$. By Lemma \ref{MeanInequalities}(a), $M^{\ast}_{K_{1,n-3}} < M_{K_{1} \cup K_{1,n-2}}$. Further, $M_{K_{1} \cup K_{s,n-s-1}}$ can itself be expressed as a convex combination of $M_{K_{1}}$ and $M_{K_{s,n-s-1}}$. From this and Theorem \ref{FirstMainLemma}, it follows that
$M_{K_{1} \cup K_{s,n-s-1}} < M_{K_{2,n-3}}$. By Lemma \ref{MeanInequalities}(b), $M_{K_{2,n-3}} < M_{K_{1} \cup K_{1,n-2}}$ for $n \geq 9$.
Hence $M_{G} < M_{K_{1} \cup K_{1,n-2}}$.
\end{proof}

\section{Conclusion}
In this article we studied the extremal structures for the mean order of the connected induced subgraphs of cographs, in both the connected and disconnected cases. For a connected cograph $G$ of order $n\geq 1$, we proved that
\[
\tfrac{n}{2}< M_G\leq \tfrac{n+1}{2},
\]
with equality if and only if $n=1.$  This means that the \emph{density} of $G$, defined as $\tfrac{M_G}{n}$, is always close to $\tfrac{1}{2}$; to be precise, the density of any infinite sequence of distinct cographs tends to $\tfrac{1}{2}.$  This contrasts the situation for trees, where the denisty lies between $\tfrac{1}{3}$ and $1$, and these bounds are asymptotically sharp (see \cite{j1} for details).

In Section \ref{LocalSection}, we showed that for connected cographs, the local mean at each vertex is at least as large as the global mean, as was the case for trees. We demonstrated that this property does not extend to all connected graphs, providing an infinite family of counterexamples.  It remains an interesting open problem to characterize those graphs for which the local mean at each vertex is at least as large as the global mean.

Finally, it remains an interesting open problem to determine the extremal structures for the mean connected induced subgraph order among all connected graphs of order $n$.  We conjecture that the minimum mean connected induced subgraph order is obtained by the path $P_n$, as for trees, and we have verified this statement for $n\leq 9$.  Determining the structure of graphs with maximum mean seems much more difficult.  Using a computer algebra system, we have determined that the maximum is not obtained by a tree for $3\leq n\leq 9$, but instead by a $2$-connected graph.  The connected graphs with maximum mean are given in Table \ref{MaxTable2}.  In the table, $\Theta_{i,j,k}$ denotes the \emph{$(i,j,k)$-theta graph}, obtained from the graph on two vertices with three multiedges between them by subdividing the first edge $i$ times, the second edge $j$ times, and the third edge $k$ times, and $G\square H$ denotes the \emph{Cartesian product} of graphs $G$ and $H$.

\begin{table}[h]
\begin{center}
\begin{tabular}{c c c}
Order & Graph & Global Mean\\
$3$ & $K_3$ & $12/7$\\
$4$ & $K_{2,2}$ & $28/13$\\
$5$ & $\Theta_{1,1,1}$ & $69/26$\\
$6$ & $\Theta_{2,1,1}$ & $67/21$\\
$7$ & $\Theta_{2,2,1}$ & $83/22$\\
$8$ & $\Theta_{2,2,2}$ & $22/5$\\
$9$ & $P_3\square P_3$ & $996/197$ 
\end{tabular}
\caption{The graphs of maximum global mean among all connected graphs of small order.\label{MaxTable2}}
\end{center}
\end{table}



\providecommand{\bysame}{\leavevmode\hbox to3em{\hrulefill}\thinspace}
\providecommand{\MR}{\relax\ifhmode\unskip\space\fi MR }
\providecommand{\MRhref}[2]{%
  \href{http://www.ams.org/mathscinet-getitem?mr=#1}{#2}
}
\providecommand{\href}[2]{#2}

\end{document}